\documentclass[12pt,leqno,a4paper,reqno,final]{amsart}
\usepackage{amsmath, amsthm, verbatim, amsfonts, amssymb, xcolor,bbm}
\usepackage{mathrsfs}
\usepackage{dsfont}
\usepackage[a4paper,margin=1in]{geometry}

\usepackage[english]{babel}
\usepackage[notref,notcite]{showkeys}
\usepackage[german=guillemets]{csquotes}
\usepackage{url}
\setquotestyle[british]{english}

\theoremstyle{plain}
\newtheorem{theorem}{Theorem}
\newtheorem{corollary}[theorem]{Corollary}
\newtheorem{lemma}[theorem]{Lemma}

\newtheorem*{fact}{Fact}

\newtheorem*{theorem*}{Theorem}
\newtheorem*{corollary*}{Corollary}
\newtheorem*{lemma*}{Lemma}
\newtheorem*{proposition*}{Proposition}

\theoremstyle{definition}
\newtheorem{remark}[theorem]{Remark}
\newtheorem*{remark*}{Remark}
\newtheorem{example}[theorem]{Example}
\newtheorem{definition}[theorem]{Definition}

\newtheorem*{notation}{Notation}

\renewcommand\labelenumi{\textup{\alph{enumi})}}
\renewcommand\theenumi\labelenumi

\makeatletter
\def\@makefnmark{\hbox{\textup{(}\@textsuperscript{\normalfont\@thefnmark}\textup{)}}}
\makeatother

\DeclareFontFamily{U}{mathx}{\hyphenchar\font45}
\DeclareFontShape{U}{mathx}{m}{n}{
      <5> <6> <7> <8> <9> <10>
      <10.95> <12> <14.4> <17.28> <20.74> <24.88>
      mathx10
      }{}
\DeclareSymbolFont{mathx}{U}{mathx}{m}{n}
\DeclareFontSubstitution{U}{mathx}{m}{n}
\DeclareMathAccent{\widecheck}{0}{mathx}{"71}
\DeclareMathAccent{\wideparen}{0}{mathx}{"75}

\newcommand\Ee{\mathds{E}}

\newcommand\real{{\mathds{R}}}

\newcommand\I{\mathds{1}}
\newcommand\Pp{\mathds{P}}
\newcommand\id{\mathop{\mathrm{id}}}

\newcommand\supp{\mathop{\mathrm{supp}}}

\newcommand\esssup{\mathop{\mathrm{ess\, sup}}}
\newcommand\re{\mathop{\mathrm{Re}}}

\newcommand\rn{{\mathds{R}^n}}

\newcommand{\Lcal}{\mathcal{L}}
\newcommand{\Fcal}{\mathcal{F}}

\newcommand{\Pcal}{\mathcal{P}}
\newcommand{\Scal}{\mathcal{S}}

\newcommand{\boxperp}{[\perp]}

\renewcommand\phi\varphi

\newcommand{\entier}[1]{\lfloor #1\rfloor}
\newcommand{\scalp}[2]{\langle #1,\,#2\rangle}

\usepackage{marginnote}
\marginparwidth50pt 
\pdfoptionpdfminorversion=6

\begin{document}
\title[Liouville, Fourier multipliers and coupling]{\bfseries The Liouville theorem for a class of Fourier multipliers and its connection to coupling}

\author[D.~Berger]{David Berger}
\author[R.L.~Schilling]{Ren\'e L.\ Schilling}
\address[D.~Berger \& R.L.\ Schilling]{TU Dresden\\ Fakult\"{a}t Mathematik\\ Institut f\"{u}r Mathematische Stochastik\\ 01062 Dresden, Germany}
\email{david.berger2@tu-dresden.de}
\email{rene.schilling@tu-dresden.de}

\author[E.~Shargorodsky]{Eugene Shargorodsky}
\address[E.~Shargorodsky]{King's College London\\ Strand Campus\\ Department of Mathematics\\ Strand, London, WC2R 2LS, U.K.  \emph{and} TU Dresden\\ Fakult\"{a}t Mathematik\\ Institut f\"{u}r Mathematische Stochastik\\ 01062 Dresden, Germany}
\email{eugene.shargorodsky@kcl.ac.uk}

\thanks{\emph{Acknowledgement}. We are grateful for the careful comments of several expert referees, who helped us to improve the exposition of this paper. Financial support for the first two authors through the DFG-NCN Beethoven Classic 3 project SCHI419/11-1 \& NCN 2018/31/G/ST1/02252, the  6G-life project (BMBF programme ``Souver\"an. Digital. Vernetzt.'' 16KISK001K) and the SCADS.AI centre is gratefully acknowledged.
}

\begin{abstract}
    The classical Liouville property says that all bounded harmonic functions in $\mathds{R}^n$, i.e.\ all bounded functions satisfying $\Delta f = 0$, are constant. In this paper we obtain necessary and sufficient conditions on the symbol of a Fourier multiplier operator $m(D)$, such that the solutions $f$ to $m(D)f=0$ are Lebesgue a.e.\ constant (if $f$ is bounded) or coincide Lebesgue a.e.\ with a polynomial (if $f$ is polynomially bounded). The class of Fourier multipliers includes the (in general non-local) generators of L\'evy processes. For generators of L\'evy processes, we obtain necessary and sufficient conditions for a strong Liouville theorem where  $f$ is positive and grows at most exponentially fast. As an application of our results above we prove a coupling result for space-time L\'evy processes.
\end{abstract}

\subjclass[2010]{\emph{Primary:} 42B15; 35B53. \emph{Secondary:} 31C05, 35B10, 47G30, 60G51.} 

\keywords{Fourier multiplier; Liouville property; strong Liouville property; harmonic function; space-time harmonic function; L\'evy process; subordination; coupling}

\maketitle

\noindent
The classical Liouville property for the Laplace operator $\Delta := \sum_{k=1}^n \frac{\partial^2}{\partial x_k^2}$ on $\mathds{R}^n$ can be stated in the following way:
\begin{gather}\label{eqn:0.1}
    f\in L^\infty(\mathds{R}^n) \;\&\; \Delta f = 0 \implies f\equiv c\quad\text{almost everywhere}.
\end{gather}
Of course, $\Delta f$ has to be interpreted in a suitable sense since $f$ lacks regularity for a pointwise interpretation. Either one uses a mollifying argument based on convolution along with the fact that convolution and $\Delta$ commute or, as we do here, understands $\Delta f$ as a Schwartz distribution, i.e.\ $\Delta f$ is defined as the continuous linear form $\phi\mapsto\scalp{f}{\Delta\phi}$ for any $\phi\in C_c^\infty(\mathds{R}^n)$ (the smooth functions with compact support) or $\phi\in \Scal(\mathds{R}^n)$ (the rapidly decreasing smooth functions).

Recently, Alibaud \emph{et al.}~\cite{Ali-et-al20} and, independently, two of us \cite{BS21} gave proofs providing necessary and sufficient conditions ensuring an analogue of the Liouville property \eqref{eqn:0.1} for infinitesimal generators of L\'evy processes. The proof in \cite{Ali-et-al20} combines harmonic analysis and further methods from group theory, while the approach in \cite{BS21} uses mainly probabilistic arguments; the latter proof also yields the strong Liouville property where, in the appropriate analogue to \eqref{eqn:0.1}, $f\in L^\infty(\mathds{R}^n)$ is relaxed to $f\geq 0$ and at most exponential growth at infinity. Sufficient conditions for a \enquote{polynomial} Liouville property (if $f$ is polynomially bounded, then $f$ coincides a.e.\ with a polynomial) are due to K\"{u}hn \cite{K21}.

In the present paper, we give a very short and purely analytic proof for both the Liouville property and the polynomial Liouville property for L\'evy generators and -- as it turns out -- a much larger class of Fourier multiplier operators. In fact, the necessary and sufficient condition for the Liouville property is that $\xi=0$ is the only zero of the multiplier $m(\xi)$. For generators of L\'evy processes we refine the strong Liouville result proved in \cite{BS21} and we establish a further probabilistic interpretation of the Liouville property for L\'evy generators in terms of coupling and space-time harmonic functions.

\begin{notation}
Most of our notation is standard or self-explanatory. We write $\Fcal \phi(\xi) = \widehat\phi(\xi) = (2\pi)^{-n}\int_\mathds{R}^n e^{-ix\cdot\xi}\phi(x)\,dx$ and $\Fcal^{-1} u(x) = \int_{\mathds{R}^n} e^{ix\cdot\xi} u(\xi)\,d\xi$ for the Fourier and the inverse Fourier transform. We denote by $(T_h \phi)(x) := \phi(x - h)$ the shift by $h\in\mathds{R}^n$, $\widetilde\phi(x) := \phi(-x)$ is the reflection at the origin, and $\Lambda(x) := (1+|x|^2)^{1/2}$ is the standard weight function.
\end{notation}

\section{The proof of the Liouville theorem from \cite{BS21} revisited}\label{sec-1}

We will need a few notions from probability theory and stochastic processes, which can be found in Sato \cite{sato} or Jacob \cite{jac-1}, \cite{jac-2} and \cite{jac-3}, but the essential ingredient is the structure of the infinitesimal generator, see below. A \textbf{L\'evy process} is a stochastic process $(X_t)_{t\geq 0}$ with values in $\mathds{R}^n$ and sample paths $t\mapsto X_t(\omega)$ which are for almost all $\omega$ right-continuous with finite left-hand limits; moreover the random variables $X_{t_k}-X_{t_{k-1}}$, $0=t_0<t_1<\dots<t_m$, $m\in\mathds{N}$, are stochastically independent (independent increments) and each increment $X_{t_k}-X_{t_{k-1}}$ has the same distribution as $X_{t_k-t_{k-1}}$ (stationary increments). The fact that we are looking at a process with independent and stationary increments means that the distribution of $X_t$ (for any fixed $t>0$) characterizes the whole process; moreover, $X_t$ is necessarily infinitely divisible, so that its characteristic function (inverse Fourier transform) is of the form
\begin{gather}\label{eq:1.1}
    \Ee e^{i\xi\cdot X_t} = e^{-t\psi(\xi)},\quad\xi\in\mathds{R}^n,\; t>0,
\end{gather}
where the \textbf{characteristic exponent} $\psi(\xi)$ is uniquely given by the L\'evy--Khintchine formula
\begin{gather}\label{eq:1.2}
    \psi(\xi)
    = -ib\cdot\xi + \frac 12 \xi\cdot Q\xi + \int\limits_{0<|y|<1} \left(1-e^{iy\cdot\xi} + iy\cdot\xi\right) \nu(dy)  + \int\limits_{|y|\geq 1} \left(1-e^{iy\cdot\xi}\right) \nu(dy).
\end{gather}
The so-called L\'evy triplet $(b,Q,\nu)$ comprising a vector $b\in\mathds{R}^n$, a symmetric, positive semidefinite matrix $Q\in\mathds{R}^{n\times n}$, and a measure $\nu$ such that $\int_{y\neq 0} \min\left\{1,|y|^2\right\}\nu(dy)<\infty$, characterizes $\psi$ uniquely. For example, taking $b=0, Q = \id, \nu\equiv 0$ we get Brownian motion with its characteristic exponent $\psi(\xi) = \frac 12 |\xi|^2$, and the choice $b=0, Q=0, \nu(dy) = c_\alpha |y|^{-n-\alpha}\,dy$ with a suitable constant $c_\alpha$
yields a rotationally symmetric $\alpha$-stable process with characteristic exponent $|\xi|^\alpha$, $0<\alpha<2$.

Since L\'evy processes are Markov processes, their transition behaviour can be described by a transition semigroup $\Pcal_t f(x) = \Ee f(X_t+x)$ which, in turn, is uniquely characterized by the infinitesimal generator $\Lcal f := \frac d{dt}\Pcal_t f\big|_{t=0}$ (in the Banach space $C_\infty(\mathds{R}^n)$ of all continuous functions vanishing at infinity, say). Now the key point is the following observation:
\begin{fact}
    The infinitesimal generator $\Lcal = \Lcal_\psi$ of a L\'evy process with characteristic exponent $\psi(\xi)$ is on $C_c^\infty(\mathds{R}^n)$ a \textbf{Fourier multiplier operator} with symbol $-\psi(\xi)$, i.e.\
    \begin{gather}\label{eq:1.3}
        \Lcal_\psi\phi(x) = -\psi(D)\phi(x) = \Fcal_{\xi\to x}^{-1}\left(-\psi(\xi)\widehat\phi(\xi)\right),\quad\phi\in C_c(\mathds{R}^n),
    \end{gather}
and, combining this with \eqref{eq:1.2},
    \begin{align}\label{eq:1.3a}
    & \Lcal_\psi\phi(x) =  \ b\cdot\nabla \phi(x) + \frac12\, \nabla\cdot Q\nabla \phi(x)  \nonumber \\
    & \mbox{} + \int_{0<|y|<1} \left(\phi(x + y) - \phi(x) - y\cdot\nabla \phi(x)\right)\nu(dy)
    + \int_{|y|\geq 1} \left(\phi(x + y) - \phi(x)\right) \nu(dy).
    \end{align}
\end{fact}
Note that we get $\Lcal_\psi = \frac 12\Delta$, if $\psi(\xi) = \frac 12|\xi|^2$ (Brownian motion) and $\Lcal_\psi = -(-\Delta)^{\alpha/2}$, if $\psi(\xi) = |\xi|^\alpha$ (stable process).

Our first aim is to give a purely analytic proof of the following result.
\begin{theorem}[Liouville; \cite{Ali-et-al20}, \cite{BS21}]\label{th:1.1}
    Let $\psi$ be the characteristic exponent of a L\'evy process and denote by $\psi(D)$ the corresponding Fourier multiplier operator.
    Suppose $f \in L^\infty(\mathds{R}^n)$ is such that $\psi(D)f=0$ as a distribution, i.e.\
    \begin{gather}\label{eq:1.4}
        \scalp{f}{\widetilde\psi(D)\phi} = 0 \quad\text{for all\ } \phi \in C_c^\infty(\mathds{R}^n).
    \end{gather}
    If $\left\{\eta \in \mathds{R}^n \mid  \psi(\eta) = 0\right\} = \{0\}$, then $f \equiv \textrm{const}$ Lebesgue almost everywhere.

    Conversely, if $f \equiv \textrm{const}$ Lebesgue almost everywhere for every $f \in L^\infty(\mathds{R}^n)$ satisfying \eqref{eq:1.4},
    then $\left\{\eta \in \mathds{R}^n \mid  \psi(\eta) = 0\right\} = \{0\}$.
\end{theorem}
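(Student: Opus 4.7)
The plan is to pass to the Fourier side and localize the support of $\widehat f$. Since $f\in L^\infty(\rn)\subset \Scal'(\rn)$, its Fourier transform $\widehat f$ is a tempered distribution, and transferring the distributional hypothesis \eqref{eq:1.4} via Parseval/duality yields, modulo reflection and the normalisation of $\Fcal$,
\[
    \scalp{\widehat f}{\widetilde\psi\,\widehat\phi} = 0 \qquad \text{for every } \phi\in C_c^\infty(\rn),
\]
which is the distributional incarnation of $\psi\widehat f = 0$ in $\Scal'(\rn)$. The goal is to upgrade this to $\supp\widehat f\subset \{\xi\in\rn : \psi(\xi)=0\}$, which under the hypothesis collapses to $\{0\}$.

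Fix $\xi_0\neq 0$. By continuity of $\psi$ there is an open neighbourhood $U\not\ni 0$ of $\xi_0$ and a $\delta>0$ with $|\psi|\geq\delta$ on $\overline U$. The natural move is, given any $\eta\in C_c^\infty(U)$, to test $\widehat f$ against $\eta/\widetilde\psi$ and use $\psi\widehat f=0$ to conclude $\scalp{\widehat f}{\eta}=0$. The obstruction is that a generic L\'evy exponent is only known to be continuous on $\rn$, so $1/\widetilde\psi$ need not be smooth and $\eta/\widetilde\psi$ need not lie in $\Scal$. I would handle this by first regularising: convolving $f$ with an arbitrary $\rho\in C_c^\infty(\rn)$ preserves \eqref{eq:1.4} and produces $f*\rho\in C^\infty\cap L^\infty$ with Fourier transform $\widehat\rho\,\widehat f$. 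Then, approximating $\psi$ by a sequence of smooth exponents $\psi_k$ (for example by truncating and mollifying the L\'evy measure) that converge locally uniformly to $\psi$ on $\overline U$, one may divide by $\psi_k$ on $U$ for large $k$ (using the lower bound $|\psi|\geq\delta$) and pass to the limit to conclude that $\widehat\rho\,\widehat f$ vanishes on $U$. Choosing $\rho$ with $\widehat\rho$ nonvanishing on $U$ and varying $\rho$ then shows that $\widehat f$ itself is supported in $\{0\}$.

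Once this is established, the structure theorem for tempered distributions supported at a single point gives $\widehat f=\sum_{|\alpha|\leq N} c_\alpha\,\partial^\alpha\delta_0$; inverting the Fourier transform expresses $f$ as a polynomial, and boundedness of $f$ forces this polynomial to be constant. For the converse, suppose $\eta_0\in\rn\setminus\{0\}$ satisfies $\psi(\eta_0)=0$. The Hermitian symmetry $\overline{\psi(\xi)}=\psi(-\xi)$ that comes from \eqref{eq:1.2} yields $\psi(-\eta_0)=0$ as well, so $f(x):=\cos(\eta_0\cdot x)$ is bounded and non-constant, and a direct pairing in \eqref{eq:1.4} (using the Fourier representation of $\widetilde\psi(D)\phi$) shows that $\psi(D)f=0$ distributionally, providing the required counterexample. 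The main technical obstacle is the division by $\psi$ in the second paragraph: making the inversion of $\psi(D)$ on test functions supported in $\{\psi\neq 0\}$ rigorous when $\psi$ is merely continuous; the regularisation-and-approximation recipe above removes the difficulty, but is the step where the real work of the proof is concentrated.
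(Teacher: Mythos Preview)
Your high-level strategy matches the paper's: show $\supp\widehat f\subset\{0\}$, invoke the structure theorem for distributions supported at a point, and use boundedness to kill the non-constant terms; the converse is also fine (the paper uses $e^{i\gamma\cdot x}$ rather than $\cos(\eta_0\cdot x)$, but either works). The divergence is entirely in how one establishes $\supp\widehat f\subset\{\psi=0\}$.

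The paper does not work on the Fourier side for this step. It stays on the physical side: \eqref{eq:1.4} is rewritten as $f*\widetilde{\widetilde\psi(D)\phi}=0$ for all $\phi\in C_c^\infty(\rn)$, and since $\widetilde\psi(D)$ maps $C_c^\infty(\rn)$ into $L^1(\rn)$, this is a family of vanishing convolutions of $f\in L^\infty$ with $L^1$ functions. Wiener's theorem (Theorem~\ref{th:1.2}: if $f*g=0$ for all $g$ in a subspace $Y\subset L^1$, then $\supp\widehat f\subset\bigcap_{g\in Y}\{\widehat g=0\}$) then gives the conclusion in one stroke, since the common zero set is exactly $\{\psi=0\}$.

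Your route has a genuine gap, and the proposed fix does not close it. Already the pairing $\langle\widehat f,\widetilde\psi\,\widehat\phi\rangle$ is problematic: since $\psi$ is in general only continuous, $\widetilde\psi\,\widehat\phi\notin\Scal(\rn)$, so this is not a legitimate test function for $\widehat f\in\Scal'(\rn)$. Convolving $f$ with $\rho$ changes nothing on the Fourier side, as $\widehat\rho\,\widehat f$ is still only a tempered distribution and multiplication by the merely continuous $\psi$ remains undefined. The approximation $\psi_k\to\psi$ is where the argument really breaks: the hypothesis is $\psi(D)f=0$, not $\psi_k(D)f=0$, so dividing by $\psi_k$ yields no direct information. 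If instead you write $\eta=\psi\cdot(\eta/\psi_k)+(\psi_k-\psi)(\eta/\psi_k)$ and try to use the equation on the first summand, you must pair $\widehat f$ with $\psi\cdot(\eta/\psi_k)$, which is again non-smooth and hence not a valid test function for $\widehat f\in\Scal'$. Making this limit rigorous would require locally inverting $\psi$ within a convolution algebra compatible with the $L^\infty$--$L^1$ duality, and that is precisely the Wiener--L\'evy inversion hidden inside the proof of Theorem~\ref{th:1.2}; your sketch provides no substitute for it.
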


A formal  proof of the implication
\begin{align*}
    & \left\{\eta \in \mathds{R}^n \mid  \psi(\eta) = 0\right\} = \{0\} \\
    &  \quad\implies \ \  \text{every bounded solution of $\psi(D)f=0$  is a.e.\ constant}
\end{align*}
is very easy. Indeed, if $\psi(D)f = 0$, then $\psi(\eta) \widehat{f}(\eta) \equiv 0$. Hence
\begin{gather*}
    \supp \widehat{f} \subset \left\{\eta \in \mathds{R}^d \mid \ \psi(\eta) = 0\right\} = \{0\} .
\end{gather*}
Then $\widehat{f} = \sum_{|\alpha| \le N} c_\alpha \partial^\alpha \delta$\, for some $N \in \mathds{Z}_+$ and $c_\alpha \in \mathds{C}$. Hence $f$ is a polynomial. If it is bounded, it has to be constant. This proof can be made rigorous if $\psi$ is $C^\infty$-smooth (see \cite[Theorem 3.2]{BS21}). However, $\psi$ may be continuous but nowhere differentiable (see \cite[Remark 3.3]{BS21}), in which case defining the product of $\psi$ and the distribution $\widehat{f}$ is by no means trivial.

Our analytic proof of Theorem \ref {th:1.1} is based on the following standard result, which is known from the proof of Wiener's Tauberian theorem, cf.\ Rudin~\cite[Theorem~9.3]{Ru73}.
\begin{theorem}\label{th:1.2}
    If $f \in L^\infty(\mathds{R}^n)$, $Y$ is a linear subspace of $L^1(\mathds{R}^n)$, and
    \begin{gather}\label{eq:1.5}
        f \ast g = 0 \quad\text{for every\ } g \in Y ,
    \end{gather}
    then the set
    \begin{gather}\label{eq:1.6}
        Z(Y) := \bigcap_{g \in Y} \left\{\xi \in \mathds{R}^n \mid \widehat{g}(\xi) = 0\right\}
    \end{gather}
    contains the support of the tempered distribution $\widehat{f}$.
\end{theorem}
    Theorem \ref{th:1.2} says, in essence, that for $f\in L^\infty(\mathds{R}^n)$ and $g\in L^1(\mathds{R}^n)$ the condition $f\ast g=0$ implies that $\widehat{g}(\xi)=0$ for all $\xi$ in the support of $\widehat{f}$.
\begin{proof}[Proof of Liouville's Theorem~\ref{th:1.1}]
Since $\psi$ is the characteristic exponent of a L\'evy process, $\widetilde\psi(D)$ (as well as $\psi(D)$) is a continuous operator from $C_c^\infty(\mathds{R}^n)$ to $L^1(\mathds{R}^n)$, see e.g.\ \cite[Lemma 3.4]{S01} or the discussion in Example~\ref{ex:1.5} below. Thus, the dual pairing in \eqref{eq:1.4} is well-defined.

Suppose $\left\{\eta \in \mathds{R}^n \mid \psi(\eta) = 0\right\} = \{0\}$. Notice that $T_y \phi \in C_c^\infty(\mathds{R}^n)$ for every $\phi\in C_c^\infty(\mathds{R}^n)$ and all $y \in \mathds{R}^n$. Therefore, we see from \eqref{eq:1.4} that
\begin{gather*}
    \left(f \ast \widetilde{\widetilde\psi(D) \phi}\right)(y)
    = \scalp{f}{T_y\widetilde\psi(D) \phi}
    = \scalp{f}{\widetilde\psi(D) \left(T_y\phi\right)}
    = 0 \quad\text{for all\ } y \in \mathds{R}^n .
\end{gather*}
Hence
\begin{gather*}
    f \ast \widetilde{\widetilde\psi(D) \phi} = 0 \quad\text{for all\ } \phi \in C_c^\infty(\mathds{R}^n) .
\end{gather*}
It is easy to see that
\begin{align*}
    \bigcap_{\phi \in C_c^\infty(\mathds{R}^n)} \bigg\{\eta \in \mathds{R}^n \mid \widehat{\widetilde{\widetilde\psi(D) \phi}}(\eta) = 0\bigg\}
    &= \bigcap_{\phi \in C_c^\infty(\mathds{R}^n)} \bigg\{\eta \in \mathds{R}^n \mid  \widehat{\widetilde\psi(D) \phi}(-\eta) = 0\bigg\} \\
    &= \bigcap_{\phi \in C_c^\infty(\mathds{R}^n)} \left\{\eta \in \mathds{R}^n \mid  \psi(\eta) \widehat{\phi}(-\eta) = 0\right\}\\
    &= \left\{\eta \in \mathds{R}^n \mid \psi(\eta) = 0\right\} = \{0\} .
\end{align*}
Applying Wiener's theorem (Theorem~\ref{th:1.2}) with
\begin{gather*}
    Y := \left\{\widetilde{\widetilde\psi(D) \phi}\ \big| \ \phi \in C_c^\infty(\mathds{R}^n)\right\} \subset L^1(\mathds{R}^n) ,
\end{gather*}
we conclude that the support of the tempered distribution $\widehat{f}$ is contained in $\{0\}$. Therefore, there exist $N \in \mathds{N}_0$ and $c_\alpha \in \mathds{C}$ ($\alpha \in \mathds{N}_0^n$, $|\alpha| \leq N$) such that
\begin{gather*}
    \widehat{f} = \sum_{|\alpha| \leq N} c_\alpha \partial^\alpha \delta_0,
\end{gather*}
see, e.g.\ \cite[Thorems 6.24 and 6.25]{Ru73}. Inverting the Fourier transform, and using the assumption that $f$ is bounded, shows that $f\equiv c_0$ Lebesgue a.e.

For the converse direction, we assume the contrary, i.e.\ that the zero-set $\{\eta\in\mathds{R}^n \mid \psi(\eta)=0\}$ contains some $\gamma\neq 0$. Then $f(x) := e^{i\gamma\cdot x}$ satisfies $\psi(D_{ x})e^{i\gamma\cdot x} = e^{i\gamma\cdot x}\psi(\gamma) = 0$ for all $x\in\mathds{R}^n$. Thus, $f$ is a non-constant solution, and we are done. We note in passing that since $\psi(-\gamma)=\overline{\psi(\gamma)}$, $-\gamma \in\{ \psi=0\}$, and we can even get a real-valued solution:
\begin{equation*}
    2\psi(D_x)\cos(\gamma\cdot x)
    =\psi(D_x)\left(e^{i\gamma\cdot x}+e^{-i\gamma\cdot x}\right)
    =e^{i\gamma\cdot x}\psi(\gamma)+e^{-i\gamma\cdot x} \psi(-\gamma)
    =0.
\qedhere
\end{equation*}
\end{proof}

\begin{remark}\label{re:1.3}
    With a bit more effort, see \cite{BS21} or \cite{Ali-et-al20}, one can show that all bounded solutions in the converse direction of Theorem~\ref{th:1.1} are necessarily periodic. Since $\{\psi = 0\}$ is a closed subgroup of the additive group $(\mathds{R}^n,+)$, the periodicity group of all bounded solutions is given by the dual lattice $\{\psi = 0\}^{\boxperp} := \{x\in\mathds{R}^n \mid e^{i \gamma\cdot x}=1,\, \forall \gamma\in\{\psi=0\}\}$. The proof in \cite{Ali-et-al20} actually shows that a L\'evy generator $\psi(D)$ has the Liouville property if, and only if, $\{\psi = 0\}^{\boxperp} = \mathds{R}^n$.
\end{remark}

The above proof of Theorem~\ref{th:1.1} extends without change to Fourier multiplier operators that map $C_c^\infty(\mathds{R}^n)$ into $L^1(\mathds{R}^n)$.
\begin{theorem}\label{co:1.4}
    Let $m\in C(\mathds{R}^n)$ be such the Fourier multiplier operator
    \begin{gather*}
        C_c^\infty(\mathds{R}^n)\ni \phi \mapsto \widetilde m(D)\phi := \Fcal^{-1}(\widetilde m\widehat\phi)
    \end{gather*}
    maps $C_c^\infty(\mathds{R}^n)$ into $L^1(\mathds{R}^n)$. Suppose $f \in L^\infty(\mathds{R}^n)$ is such that $m(D)f=0$ as a distribution, i.e.\
    \begin{gather}\label{eq:1.4'}
        \scalp{f}{\widetilde{m}(D)\phi} = 0 \quad\text{for all\ } \phi \in C_c^\infty(\mathds{R}^n).
    \end{gather}
    If $\left\{\eta \in \mathds{R}^n \mid  m(\eta) = 0\right\} \subset \{0\}$, then $f \equiv \textrm{const}$ Lebesgue almost everywhere.

    Conversely, if $f \equiv \textrm{const}$ Lebesgue almost everywhere for every complex-valued  $f \in L^\infty(\mathds{R}^n)$ satisfying \eqref{eq:1.4'},
    then $\left\{\eta \in \mathds{R}^n \mid  m(\eta) = 0\right\} \subset \{0\}$. If $m(\eta)=0$ implies that $m(-\eta)=0$, then it is enough to consider real-valued $f\in L^\infty(\mathds{R}^n)$ satisfying \eqref{eq:1.4'}.
\end{theorem}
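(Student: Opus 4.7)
The proof is a mild adaptation of the argument for Theorem~\ref{th:1.1}; essentially every step carries over once we recognise that the only property of $\psi$ used there was that $\widetilde\psi(D)$ maps $C_c^\infty(\rn)$ continuously into $L^1(\rn)$, which is now posited as a hypothesis. The plan is as follows. The mapping property ensures that the distributional pairing in \eqref{eq:1.4'} and the convolution $f \ast \widetilde{\widetilde m(D)\phi}$ both make sense. Exactly as before, translation invariance of the Fourier multiplier gives the identity $\widetilde m(D)(T_y\phi) = T_y\widetilde m(D)\phi$, so that
\begin{gather*}
    \left(f \ast \widetilde{\widetilde m(D)\phi}\right)(y) = \scalp{f}{T_y\widetilde m(D)\phi} = \scalp{f}{\widetilde m(D)(T_y\phi)} = 0
\end{gather*}
for every $y\in\rn$ and every $\phi\in C_c^\infty(\rn)$.

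Next I would invoke Wiener's theorem (Theorem~\ref{th:1.2}) with $Y := \{\widetilde{\widetilde m(D)\phi} : \phi\in C_c^\infty(\rn)\}\subset L^1(\rn)$. A direct Fourier calculation, identical to the one in the proof of Theorem~\ref{th:1.1}, yields $\widehat{\widetilde{\widetilde m(D)\phi}}(\eta) = m(\eta)\widehat\phi(-\eta)$, so that the joint zero set $Z(Y)$ in \eqref{eq:1.6} equals $\{\eta\in\rn \mid m(\eta)=0\} \subseteq \{0\}$. Hence $\supp\widehat f \subseteq \{0\}$, and the structure theorem for tempered distributions supported at a point (as cited in the proof of Theorem~\ref{th:1.1}) gives $\widehat f = \sum_{|\alpha|\leq N} c_\alpha\,\partial^\alpha\delta_0$. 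Inverting the Fourier transform produces a polynomial, and the assumption $f\in L^\infty(\rn)$ forces all terms of positive degree to vanish, leaving $f\equiv c_0$ Lebesgue a.e. In the degenerate case where $\{m=0\}$ is actually empty, Wiener's theorem forces $\supp\widehat f = \emptyset$, i.e.\ $f\equiv 0$ a.e., which is still constant.

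For the converse I would argue by contraposition, just as in Theorem~\ref{th:1.1}. Pick $\gamma\neq 0$ with $m(\gamma)=0$ and set $f(x):=e^{i\gamma\cdot x}$; this is bounded and non-constant. A short computation, again exploiting the identity $\widehat{\widetilde m(D)\phi}(\xi) = m(-\xi)\widehat\phi(\xi)$, shows that $\scalp{f}{\widetilde m(D)\phi}$ is a constant multiple of $m(\gamma)\widehat\phi(-\gamma) = 0$, so \eqref{eq:1.4'} holds while $f$ fails to be a.e.\ constant.

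I do not anticipate any serious obstacle: the only point that required a moment of thought in the original proof was the Fourier-analytic interpretation of $Z(Y)$, and this does not depend on $m$ being a L\'evy symbol. The sole notational nuance is that $\{m=0\}$ is allowed to be empty (rather than necessarily containing the origin as was the case for a characteristic exponent), which only strengthens the conclusion and does not affect any step of the argument.
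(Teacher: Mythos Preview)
Your proposal is correct and follows precisely the approach intended by the paper, which simply remarks that ``the above proof of Theorem~\ref{th:1.1} extends without change to Fourier multiplier operators that map $C_c^\infty(\rn)$ into $L^1(\rn)$'' and gives no separate argument. Your explicit write-up, including the handling of the possibly empty zero set, is exactly the intended extension.
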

\begin{remark}\label{re:1.5}
If  $\left\{\eta \in \mathds{R}^n \mid  m(\eta) = 0\right\} \subsetneqq \{0\}$, i.e.\ $\left\{\eta \in \mathds{R}^n \mid  m(\eta) = 0\right\} = \emptyset$,
then $f \equiv 0$ is the only constant solution of $m(D)f=0$.
\end{remark}

\begin{example}\label{ex:1.5}
Let us give a few examples of multipliers satisfying the key assumption of Theorem~\ref{co:1.4}. The following multipliers $\kappa$ are such that $\kappa(D)$ maps $C_c^\infty(\mathds{R}^n)$ into $L^1(\mathds{R}^n)$.
\begin{enumerate}\itemsep=5pt
\item\label{ex:1.5-1}
$\kappa$ is a linear combination of terms of the form $ab$, where $a$ is the Fourier transform of a finite Borel measure on $\mathds{R}^n$, and all partial derivatives
$\partial^\alpha_\xi b$ with $|\alpha| \le n + 1$ are polynomially bounded.

\emph{Indeed:} Let $b_N := b\Lambda^{-2N}$, $N \in \mathds{R}^n$. For a sufficiently large $N$, all partial derivatives $\partial^\alpha_\xi b_N$ with $|\alpha| \le n + 1$ belong to $L^1(\mathds{R}^n)$. Then $x^\alpha \Fcal^{-1}\left(b_N\right) \in L^\infty(\mathds{R}^n)$, $|\alpha| \le n + 1$. Hence
\begin{align*}
    & \Fcal^{-1}\left(b_N\right), \ |x_j|^{n + 1}  \Fcal^{-1}\left(b_N\right) \in L^\infty(\mathds{R}^n) , \ j = 1, \dots, n \\
    &\implies (1 + |x|)^{n + 1}\Fcal^{-1}\left(b_N\right) \in L^\infty(\mathds{R}^n)
    \implies \Fcal^{-1}\left(b_N\right) \in L^1(\mathds{R}^n) .
\end{align*}

Let $\phi\in C_c^\infty(\mathds{R}^n) \subset L^1(\mathds{R}^n)$. We have $b_N(D)\phi = (2\pi)^{-n} \Fcal^{-1}(b_N)*\phi$, and  it follows from Young's inequality
\begin{gather*}
    \|(\Fcal^{-1}b_N)*\phi\|_{L^1}
    \leq \|\Fcal^{-1}b_N\|_{L^1} \|\phi\|_{L^1}
\end{gather*}
that $b_N(D)$ maps $C_c^\infty(\mathds{R}^n)$ into $L^1(\mathds{R}^n)$. Since the differential operator $\Lambda^{2N}(D)$ maps $C_c^\infty(\mathds{R}^n)$ into itself, and $(ab)(D) = a(D)b_N(D)\Lambda^{2N}(D)$, it is left to show that $a(D)$ maps $L^1(\mathds{R}^n)$ into itself. Since $a = \Fcal\mu$ for a finite Borel measure $\mu$,
\begin{gather*}
    (a(D)g)(x) = (2\pi)^{-n} \int_{\mathds{R}^n} g(x - y)\, \mu(dy), \quad x \in \mathds{R}^n , \qquad g \in \mathds{R}^n ,
\end{gather*}
and
\begin{gather*}
    \|a(D)g\|_{L^1}
    \leq (2\pi)^{-n}  \mu(\mathds{R}^n) \|g\|_{L^1} \quad\text{for all}\quad g \in L^1(\mathds{R}^n) .
\end{gather*}

A particular example is the characteristic exponent of a L\'evy process
\begin{align*}
   \psi(\xi)
    &= -ib\cdot\xi + \frac 12 \xi\cdot Q\xi + \int\limits_{0<|y|<1} \left(1-e^{iy\cdot\xi} + iy\cdot\xi\right) \nu(dy)  + \int\limits_{|y|\geq 1} \left(1-e^{iy\cdot\xi}\right) \nu(dy) \\
    &=: \psi_0(\xi) - \int\limits_{|y|\geq 1} e^{iy\cdot\xi}\,\nu(dy)  .
\end{align*}
The last term in the above formula is the Fourier transform of a finite Borel measure.
The smoothness of $\psi_0$ follows immediately from the differentiation lemma for pa\-ra\-me\-ter-\/dependent integrals and the observation that the integrand of the formally differentiated function $\partial^\alpha_{\xi} \psi_0$ can be bounded by $\mathrm{const}\, |y|^2$, which is $\nu$-integrable over $\{y \in \mathds{R}^n : \ 0<|y|<1\}$.
This bound also shows that
    $|\partial^\alpha\psi_0(\xi)|\leq c_0(1+|\xi|^2)$ if $|\alpha|=0$,
    $|\partial^\alpha\psi_0(\xi)|\leq c_1(1+|\xi|)$ if $|\alpha|=1$, and
    $|\partial^\alpha\psi_0(\xi)|\leq c_\ell$ if $|\alpha|=\ell\geq 2$,
thus we get polynomial boundedness of $\psi_0$ and its derivatives. A fully worked-out proof can be found in \cite[Lemma~4]{BS21} as well as \cite[Lemma 3.6.22, Theorem 3.7.13]{jac-1}.

\item\label{ex:1.5-5}
    Any $\kappa$ of the form
    \begin{gather*}
        \kappa(\xi)
        = \sum_{|\alpha|=0}^{2s} c_\alpha \frac{i^{|\alpha|}}{\alpha!} \xi^\alpha
        + \int\limits_{0<|y|<1}\bigg[1-e^{iy\cdot\xi}+\sum_{|\alpha|=0}^{2s-1}\frac{i^{|\alpha|}}{\alpha!} y^\alpha\xi^\alpha\bigg]\,\nu(dy)
        + \int\limits_{|y|\geq 1} \left(1-e^{iy\cdot\xi}\right)\nu(dy)
    \end{gather*}
    with $s\in\mathds{N}$, $c_\alpha\in\mathds{R}$, and a measure $\nu$ on $\mathds{R}^n\setminus\{0\}$ such that $\int_{y\neq 0} \min\left\{1,|y|^{2s}\right\}\,\nu(ds)<\infty$.
    (As usual, for any $\alpha\in\mathds{N}_0^n$ and $\xi \in\mathds{R}^n$, we define $\alpha!:= \prod_1^n \alpha_k!$ and $\xi^\alpha := \prod_1^n \xi_k^{\alpha_k}$.)
    The proof of this assertion goes along the lines of Part~\ref{ex:1.5-1}.

    Functions of this type appear naturally in positivity questions related to generalized functions, see e.g.\ Gelfand \& Vilenkin \cite[Chapter II.4]{gel-vil64} or Wendland \cite{wendland}. Some authors call the function $-\kappa$ (under suitable additional conditions on $c_\alpha$'s) a \textbf{conditionally positive definite function}. Note that $s=1$ is just the L\'evy--Khintchine formula \eqref{eq:1.2}.
\end{enumerate}
\end{example}

\section{The Liouville theorem for polynomially bounded functions}\label{PolyBdd}
We are now going to show that our argument used in the proof of Theorems~\ref{th:1.1} and \ref{co:1.4} extends to polynomially bounded
functions $f$ in \eqref{eq:1.4}.

To simplify the presentation, we use the function $\Lambda(x) = \left(1+|x|^2\right)^{1/2}$ as well as the following function spaces.
Let $\beta \geq 0$, and
\begin{align*}
    L^1_\beta(\mathds{R}^n)
    &:= \left\{g \in L^1(\mathds{R}^n) \mid \|g\|_{L^1_\beta} := \int_{\mathds{R}^n} |g(x)| \Lambda(x)^\beta\, dx < \infty\right\},
\\
    L^\infty_{-\beta}(\mathds{R}^n)
    &:= \left\{f \in L^\infty_{\mathrm{loc}}(\mathds{R}^n) \mid  \|f\|_{L^\infty_{-\beta}} := \left\|\Lambda^{-\beta} f\right\|_{L^\infty} = \esssup_{x \in \mathds{R}^n}  \Lambda(x)^{-\beta} |f(x)| < \infty\right\} .
\end{align*}
Obviously, $L^\infty_{-\beta}(\mathds{R}^n) \subset \Scal'(\mathds{R}^n)$, $\Scal(\mathds{R}^n) \subset L^1_\beta(\mathds{R}^n) \subset L^1(\mathds{R}^n)$, and $L^1_\beta(\mathds{R}^n)$ is a convolution algebra; $A_\beta := \left\{c\delta_0 + g \mid c\in\mathds{C}, g\in L^1_\beta(\mathds{R}^n)\right\}$ is $L^1_\beta(\mathds{R}^n)$ with a unit attached, cf.\ Rudin \cite[10.3(d), 11.13(e)]{Ru73}.

We need the following analogue of Theorem~\ref{th:1.2} for the pair $(L^1_\beta(\mathds{R}^n),L^\infty_{-\beta}(\mathds{R}^n))$.
\begin{theorem}\label{th:2.1}
    If $f \in L^\infty_{-\beta}(\mathds{R}^n)$, $Y$ is a linear subspace of $L^1_\beta(\mathds{R}^n)$, and
    \begin{gather}\label{eq:2.1}
        f \ast g = 0 \quad\text{for every\ } g \in Y ,
    \end{gather}
    then the set
    \begin{gather}\label{eq:2.2}
        Z(Y) := \bigcap_{g \in Y} \left\{\xi \in \mathds{R}^n \mid \widehat{g}(\xi) = 0\right\}
    \end{gather}
    contains the support of the tempered distribution $\widehat{f}$.
\end{theorem}
\begin{proof}
    Pick any $\xi_0 \in \mathds{R}^n\setminus Z(Y)$. There exists some $g \in Y$ such that $\widehat{g}(\xi_0) = 1$. Since $\widehat{g}$ is continuous, there is a neighbourhood $V=V(\xi_0)$ of $\xi_0$ such that $\left|\widehat{g}(\xi) - 1\right| < 1/2$ for all $\xi \in V$.

    To prove the theorem, it is sufficient to show that $\widehat{f} = 0$ in $V$, or, equivalently, that $\scalp{\widehat{f}}{\widehat{v}} = 0$ for every $v \in \Scal(\mathds{R}^n)$ whose Fourier transform $\widehat{v}$ has its support in $V$. Since
\begin{gather*}
    \scalp{\widehat{f}}{\widehat{v}}
    =  (2\pi)^{-n} \scalp{f}{\widetilde{v}}
    = (2\pi)^{-n} (f\ast v)(0),
\end{gather*}
    it is sufficient to prove that $f\ast v(0) = 0$.

    Take $\phi \in C_c^\infty(V)$ such that $0 \leq \phi \leq 1$, and $\phi(\xi) = 1$ for every $\xi$ in the support of $\widehat{v}$. Since $\Fcal^{-1}{\phi} \in \Scal(\mathds{R}^n)$, there exists an element $u \in A_\beta$ such that
\begin{gather*}
    \widehat{u} = \phi\, \widehat{g} + 1 - \phi .
\end{gather*}
    It is easy to see that $\re \widehat{u}(\xi) > 1/2$ for all $\xi \in \mathds{R}^n$. Then there exists some $w \in A_\beta$ such that $\widehat{w} = 1/\widehat{u}$, see, e.g.\ \cite[Theorems 1.41 and 2.11]{D56} or \cite[Theorem 1.3]{FGL06}. Hence
\begin{gather*}
    \widehat{v}
    = \widehat{w}\, \widehat{u}\, \widehat{v} = \widehat{w}\, \phi\, \widehat{g}\, \widehat{v}
    =  \widehat{g}\, \widehat{w}\, \phi\, \widehat{v} .
\end{gather*}
    So, $v = g\ast G$ for some $G \in L^1_\beta(\mathds{R}^n)$.\footnote{The equality $v = g\ast G$ with $G \in L^1(\mathds{R}^n)$ is derived in the proof of \cite[Theorem 9.3]{Ru73}. Unfortunately, this version does not seem sufficient for the proof of the equality $f\ast (g\ast G) = (f\ast g)\ast G$ when $f \in L^\infty_{-\beta}(\mathds{R}^n)$.} Iterating the standard Peetre inequality $\Lambda(x-y)\leq \sqrt{2}\Lambda(x)\Lambda(y)$, we see that
\begin{gather*}
    \Lambda(x-y)^\beta
    \leq 2^\beta \Lambda(x)^\beta \Lambda(y-z)^\beta \Lambda(z)^\beta
    \quad\text{for all\ } x, y, z \in \mathds{R}^n.
\end{gather*}
This yields
\begin{align*}
     \int_{\mathds{R}^n}& |f(x - y)| \left(\int_{\mathds{R}^n} |g(y - z)| |G(z)|\, dz\right)dy \\
    & \leq 2^\beta \Lambda(x)^\beta \int_{\mathds{R}^n} |f(x - y)| \Lambda(x - y)^{-\beta}
    \left(\int_{\mathds{R}^n} |g(y - z)| \Lambda(y-z)^\beta \Lambda(z)^\beta |G(z)|\, dz\right)dy \\
    & \leq 2^\beta \Lambda(x)^\beta \|f\|_{L^\infty_{-\beta}} \|g\|_{L^1_\beta} \|G\|_{L^1_\beta} < \infty ,
\end{align*}
and the Fubini--Tonelli theorem implies $f\ast (g\ast G) = (f\ast g)\ast G$. Finally,
\begin{gather*}
    f\ast v = f\ast (g\ast G) = (f\ast g)\ast G = 0\ast G = 0 .
\qedhere
\end{gather*}
\end{proof}

We can now state and prove the Liouville theorem for polynomially bounded functions.
\begin{theorem}[Liouville property for polynomially bounded functions]\label{th:2.2}
    Let $m\in C(\mathds{R}^n)$ be such that the Fourier multiplier operator
    \begin{gather*}
        C_c^\infty(\mathds{R}^n)\ni \phi \mapsto \widetilde m(D)\phi := \Fcal^{-1}(\widetilde m\widehat\phi)
    \end{gather*}
    maps $C_c^\infty(\mathds{R}^n)$ into $L^1_{\beta}(\mathds{R}^n)$. Suppose $f \in L^\infty_{-\beta}(\mathds{R}^n)$ is such that $m(D)f=0$ as a distribution, i.e.\
     \begin{gather}\label{eq:2.3}
        \scalp{f}{\widetilde m(D)\phi} = 0 \quad\text{for all\ } \phi \in C_c^\infty(\mathds{R}^n).
    \end{gather}
    If $\left\{\eta \in \mathds{R}^n \mid  m(\eta) = 0\right\} \subset \{0\}$, then $f$ coincides Lebesgue a.e.\ with a polynomial of degree at most $\entier{\beta}$.

    Conversely, if every complex-valued $f \in L^\infty_{-\beta}(\mathds{R}^n)$ satisfying \eqref{eq:2.3} coincides Lebesgue a.e.\ with a polynomial, then $\left\{\eta \in \mathds{R}^n \mid  m(\eta) = 0\right\} \subset \{0\}$. If $m(\eta)=0$ implies that $m(-\eta)=0$, then it is enough to consider real-valued $f \in L^\infty_{-\beta}(\mathds{R}^n)$ satisfying \eqref{eq:2.3}.
\end{theorem}
\begin{proof}
    If we replace in the proof of Theorem~\ref{th:1.2}\, $\psi\rightsquigarrow m$, $L^1(\mathds{R}^n)\rightsquigarrow L^1_\beta(\mathds{R}^n)$ and $L^\infty(\mathds{R}^n) \rightsquigarrow L^\infty_{-\beta}(\mathds{R}^n)$, we can follow the argument line-by line up to the point where we get
    \begin{gather*}
        \widehat{f} = \sum_{|\alpha| \leq N} c_\alpha \partial^\alpha \delta_0.
    \end{gather*}
    Again, we invert the Fourier transform and use the polynomial boundedness of $f$ to see that $f$ coincides Lebesgue a.e.\ with a polynomial of degree less or equal
    than $\entier{\beta}$. The converse statement follows from that in Theorem \ref{co:1.4}. Indeed, if every $f \in L^\infty_{-\beta}(\mathds{R}^n)$ satisfying \eqref{eq:2.3}
    coincides Lebesgue a.e.\ with a polynomial, then the same is true for any such $f$ in $L^\infty(\mathds{R}^n) \subseteq L^\infty_{-\beta}(\mathds{R}^n)$.
    Since the only polynomials contained in $L^\infty(\mathds{R}^n)$ are constants, one can apply the converse statement in Theorem \ref{co:1.4}.
\end{proof}

\begin{example}\label{ex:2.3}
    In this example we discuss the multipliers from Example \ref{ex:1.5} in the setting of Theorem \ref{th:2.2}. The following conditions ensure that $\kappa(D)$
    maps $C_c^\infty(\mathds{R}^n)$ into $L_\beta^1(\mathds{R}^n)$.
\begin{enumerate}\itemsep5pt
\item\label{ex:2.3-1}
    $\kappa$ is a linear combination of terms of the form $ab$, where $a = \Fcal\mu$,\ $\mu$ is a finite Borel measure on $\mathds{R}^n$ such that $\int \Lambda^\beta(y)\mu(dy)<\infty$, and all partial derivatives $\partial^\alpha_\xi b$ with $|\alpha| \le \entier{n + \beta} + 1$ are polynomially bounded.

    \emph{Indeed:} Let $b_N := b\Lambda^{-2N}$, $N \in \mathds{R}^n$. For a sufficiently large $N$, all partial derivatives $\partial^\alpha_\xi b_N$ with $|\alpha| \le \entier{n + \beta} + 1$ belong to $L^1(\mathds{R}^n)$. Then $x^\alpha \Fcal^{-1}\left(b_N\right) \in L^\infty(\mathds{R}^n)$, $|\alpha| \le \entier{n + \beta} + 1$. Hence
    \begin{align*}
        & \Fcal^{-1}\left(b_N\right), \ |x_j|^{\entier{n + \beta} + 1}  \Fcal^{-1}\left(b_N\right) \in L^\infty(\mathds{R}^n) , \ j = 1, \dots, n \\
        & \implies (1 + |x|)^{\entier{n + \beta} + 1}\Fcal^{-1}\left(b_N\right) \in L^\infty(\mathds{R}^n)
        \implies \Fcal^{-1}\left(b_N\right) \in L^1_\beta(\mathds{R}^n) .
    \end{align*}

    Let $\phi\in C_c^\infty(\mathds{R}^n) \subset L^1_\beta(\mathds{R}^n)$. We have $b_N(D)\phi = (2\pi)^{-n} \Fcal^{-1}(b_N)*\phi$, and  it follows from Young's and Peetre's inequalities
    \begin{gather*}
        \|(\Fcal^{-1}b_N)*\phi\|_{L^1_\beta} = \left\|\Lambda^\beta\big((\Fcal^{-1}b_N)*\phi\big)\right\|_{L^1}
        \leq 2^{\beta/2} \left\|(\Lambda^\beta\Fcal^{-1}b_N)*(\Lambda^\beta\phi)\right\|_{L^1} \\
         \leq 2^{\beta/2} \|\Lambda^\beta\Fcal^{-1}b_N\|_{L^1} \|\Lambda^\beta\phi\|_{L^1}
        = 2^{\beta/2} \|\Fcal^{-1}b_N\|_{L^1_\beta} \|\phi\|_{L^1_\beta}
    \end{gather*}
   that $b_N(D)$ maps $C_c^\infty(\mathds{R}^n)$ into $L^1_\beta(\mathds{R}^n)$. Since the differential operator $\Lambda^{2N}(D)$ maps $C_c^\infty(\mathds{R}^n)$ into itself, and $(ab)(D) = a(D)b_N(D)\Lambda^{2N}(D)$, it is left to show that $a(D)$ maps $L^1_\beta(\mathds{R}^n)$ into itself. Using Peetre's inequality again, we deduce from
    \begin{gather*}
        (a(D)g)(x) = (2\pi)^{-n} \int_{\mathds{R}^n} g(x - y)\, \mu(dy), \quad x \in \mathds{R}^n, \; g \in \mathds{R}^n ,
    \end{gather*}
    that
    \begin{align*}
        &(2\pi)^n\|a(D)g\|_{L^1_\beta(\mathds{R}^n)}\\
        &\leq \int_\mathds{R}^n \Lambda^\beta(x)\int_\mathds{R}^n |g(x - y)|\,\mu(dy)\,dx
        \leq 2^{\beta/2} \int_\mathds{R}^n |g(z)|\Lambda^\beta(z)\,dz \int_\mathds{R}^n\Lambda^\beta(y)\,\mu(dy)  \\
        &= 2^{\beta/2} \int_\mathds{R}^n\Lambda^\beta(y)\,\mu(dy)\,  \|g\|_{L^1_\beta}\,.
    \end{align*}

    With a bit more effort, one can actually show that $\int \Lambda^\beta(y)\,\mu(dy) < \infty$ is also a necessary condition for $a(D)$ to map $C_c^\infty(\mathds{R}^n)$ into $L_\beta^1(\mathds{R}^n)$ (see \cite[Theorem 3]{BKS21} for a proof).

A particular example is the characteristic exponent $\kappa=\psi$ of a L\'evy process such that the L\'evy measure has finite moments of order $\beta$
(cf.\ Example \ref{ex:1.5}\ref{ex:1.5-1}).

\item\label{ex:2.3-5}
    Any $\kappa$ of the form
    \begin{gather*}
        \kappa(\xi)
        = \sum_{|\alpha|=0}^{2s} c_\alpha \frac{i^{|\alpha|}}{\alpha!} \xi^\alpha
        + \int_{0<|y|<1}\bigg[1-e^{iy\cdot\xi}+\sum_{|\alpha|=0}^{2s-1}\frac{i^{|\alpha|}}{\alpha!} y^\alpha\xi^\alpha\bigg]\,\nu(dy)
        + \int_{|y|\geq 1} \left(1-e^{iy\cdot\xi}\right)\nu(dy)
    \end{gather*}
     with $s\in\mathds{N}$, $c_\alpha\in\mathds{R}$, and a measure $\nu$ on $\mathds{R}^n\setminus\{0\}$ such that $\int_{0<|y|<1} |y|^{2s}\,\nu(dy) + \int_{|y|\geq 1}|y|^\beta\,\nu(dy)<\infty$.
\end{enumerate}
\end{example}

It is not difficult to extend Theorem \ref{th:2.2} from solutions of the equation $m(D) f = 0$ to solutions of $m(D) f = p$, where
$p$ is a polynomial.
\begin{corollary}\label{th:2.4}
 Let $m \in C(\mathds{R}^n)$ be as in Theorem~\ref{th:2.2}, and let $p$ be a polynomial. Suppose $f \in L^\infty_{-\beta}(\mathds{R}^n)$ is such that
    \begin{equation}\label{eq:2.4}
        \scalp{f}{\widetilde m(D) \phi} = \scalp{p}{\phi} \quad\text{for all\ } \phi \in C_c^\infty(\mathds{R}^n) .
    \end{equation}
    If $\left\{\eta \in \mathds{R}^n \mid  m(\eta) = 0\right\} \subset \{0\}$, then $f$ coincides Lebesgue a.e.\ with a polynomial of degree at most $\entier{\beta}$. Conversely,
    if there exists $f \in L^\infty_{-\beta}(\mathds{R}^n)$ satisfying \eqref{eq:2.4}, and every such $f$ coincides Lebesgue a.e.\ with a polynomial, then $\left\{\eta \in \rn \mid  m(\eta) = 0\right\} \subset \{0\}$.
\end{corollary}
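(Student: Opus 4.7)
The plan is to reduce Corollary~\ref{th:2.4} to Theorem~\ref{th:2.2} by eliminating the polynomial $p$ through differentiation of test functions. Setting $d := \deg p$, I would replace $\phi$ by $\partial^\gamma\phi$ in \eqref{eq:2.4} for any multi-index $\gamma$ with $|\gamma| = d+1$. An integration by parts in the right-hand side shows that $\scalp{p}{\partial^\gamma\phi} = (-1)^{|\gamma|}\scalp{\partial^\gamma p}{\phi} = 0$ because $\partial^\gamma p \equiv 0$. Hence
\[
\scalp{f}{\widetilde m(D)\partial^\gamma\phi} = 0 \quad\text{for every } \phi \in C_c^\infty(\rn) \text{ and every } |\gamma| = d+1.
\]

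The remainder of the argument follows the template of Theorem~\ref{th:2.2}. Replacing $\phi$ by $T_y\phi$ and using that translation commutes with both $\partial^\gamma$ and $\widetilde m(D)$, I obtain
\[
\bigl(f * \widetilde{\widetilde m(D)\partial^\gamma\phi}\bigr)(y) = 0 \quad\text{for every } y \in \rn.
\]
Because $\widetilde m(D)$ maps $C_c^\infty(\rn)$ into $L^1_\beta(\rn)$ and reflection preserves $L^1_\beta(\rn)$, the linear subspace
\[
Y := \spann\bigl\{\widetilde{\widetilde m(D)\partial^\gamma\phi}\ \big|\ \phi \in C_c^\infty(\rn),\ |\gamma| = d+1\bigr\} \subseteq L^1_\beta(\rn)
\]
is one to which Theorem~\ref{th:2.1} applies.

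A short Fourier-transform computation gives
\[
\widehat{\widetilde{\widetilde m(D)\partial^\gamma\phi}}(\xi) = (-1)^{|\gamma|}\, m(\xi)\,(i\xi)^\gamma\, \widehat{\phi}(-\xi).
\]
Choosing $\phi$ so that $\widehat\phi$ does not vanish anywhere and intersecting the zero sets over $|\gamma| = d+1$, the monomials $(i\xi)^\gamma$ vanish simultaneously only at $\xi = 0$, so $Z(Y) \subseteq \{m = 0\}\cup\{0\} \subseteq \{0\}$ by hypothesis. Theorem~\ref{th:2.1} then yields $\supp\widehat{f} \subseteq \{0\}$, whence $\widehat{f} = \sum_{|\alpha|\leq N} c_\alpha\,\partial^\alpha \delta_0$ for some $N\in\nat_0$; inverting the Fourier transform shows that $f$ agrees Lebesgue a.e.\ with a polynomial, and the pointwise bound $|f(x)|\leq \|f\|_{L^\infty_{-\beta}}\Lambda(x)^\beta$ forces its degree to be at most $\entier{\beta}$.

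For the converse it suffices to take $p \equiv 0$ in \eqref{eq:2.4} and invoke the converse direction of Theorem~\ref{th:2.2}: if some $\gamma \neq 0$ satisfies $m(\gamma) = 0$, then $f(x) := e^{i\gamma\cdot x} \in L^\infty(\rn)$ is a bounded solution of $m(D)f = 0$ which is not a polynomial. I expect the only delicate point to be the Fourier-transform bookkeeping in the computation of $Z(Y)$; once the differentiation trick has annihilated $p$, the argument tracks that of Theorem~\ref{th:2.2} almost verbatim.
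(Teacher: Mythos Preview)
Your proof is correct and uses the same core idea as the paper: annihilate $p$ by applying a sufficiently high-order differential operator to the test function, then reduce to the homogeneous case. The paper executes this more economically by using a single operator $\Delta^k$ with $2k > \deg p$, which allows it to define a new continuous multiplier $m_k(\xi) := |\xi|^{2k}m(\xi)$ with $\{m_k = 0\} \subseteq \{0\}$ and then invoke Theorem~\ref{th:2.2} for $m_k$ as a black box, rather than rerunning the convolution/Wiener argument. Your route via all $\partial^\gamma$ with $|\gamma| = d+1$ works just as well but forces you to track a family of multipliers and re-derive the support conclusion from Theorem~\ref{th:2.1}; one small wrinkle is the phrase ``choosing $\phi$ so that $\widehat\phi$ does not vanish anywhere'' --- the cleaner and standard move (as in the paper's proof of Theorem~\ref{th:1.1}) is simply to intersect over all $\phi \in C_c^\infty(\rn)$, since for each fixed $\xi$ there is some $\phi$ with $\widehat\phi(-\xi) \neq 0$.
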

\begin{proof}
    Pick $k \in \mathds{N}$ such that $2k$ is greater than the degree of $p$. Then $\Delta^k p = 0$. Set $m_k(\xi) := |\xi|^{2k} m(\xi)$; clearly, $m_k \in C(\mathds{R}^n)$. Since $\Delta^k$ maps $C_c^\infty(\mathds{R}^n)$ continuously into itself,  $\widetilde{m_k}(D) = \widetilde m(D) \Delta^k$ maps $C_c^\infty(\mathds{R}^n)$ into $L^1_\beta(\mathds{R}^n)$. Moreover,
    \begin{gather*}
        \scalp{f}{\widetilde m_k(D) \phi}
        = \scalp{f}{\widetilde m(D)(\Delta^k\phi)}
        = \scalp{p}{\Delta^k\phi}
        = \scalp{\Delta^k p}{\phi}
        = \scalp{0}{\phi}
        = 0
    \end{gather*}
    for all $\phi \in C_c^\infty(\mathds{R}^n)$. Now we can apply Theorem \ref{th:2.2} with $m_k$ in place of $m$. Note that $\{m_k=0\}=\{m=0\}\cup \{0\}$, i.e.\ $\{m_k=0\}\subset \{0\}$ if, and only if, $\{m=0\}\subset\{0\}$.
\end{proof}

The following result shows that the polynomial $f$ appearing in Theorem \ref{th:2.2} has degree at most 1 in the case where $n = 1$ and
$m$ is the characteristic exponent of a L\'evy process.

\begin{corollary}\label{th:4.14}
    Let $\psi : \mathds{R} \to \mathds{C}$\ be the characteristic exponent of a L\'evy process with L\'evy triplet $(Q, b, \nu)$. Suppose there exists a $\beta \ge 0$ such that $\int_{|y| \ge 1} |y|^\beta \,\nu(dy) < \infty$. If $\left\{\xi \in \mathds{R} \mid \ \psi(\xi) = 0\right\} = \{0\}$ and $f \in L^\infty_{-\beta}(\mathds{R})$ is a weak solution of the equation $\psi(D) f = 0$, then $f(x) = c_1x + c_0$ with some constants $c_1$ and $c_0$.
\end{corollary}

\begin{proof}
It follows from Theorem \ref{th:2.2} that $f$ is a polynomial of degree at most $[\beta]$. According to Lemma 2.4 in \cite{KS21}, the degree of\, $f$\,
is less than or equal to $2$.
The lemma deals with solutions of the equation $\psi(D) f = \mathrm{const}$. In the case of $\psi(D) f = 0$, its proof (see, in particular, the last paragraph
of the proof) shows that  the degree of\, $f$\, is actually less than or equal to $1$.
\end{proof}

The above result does not hold in the multi-dimensional case $n \ge 2$.
Note that in the case $n = 1$, \ $f'' = 0 \ \implies \ f(x) = ax + b$, while in the case $n = 2$, \ $\Delta f = 0$ has polynomial solutions of any degree, e.g. $\re (x_1 + i x_2)^k$, $k \in \mathds{N}$.

\section{The Liouville theorem for slowly growing functions}\label{sec-slow}

Theorem~\ref{th:2.1} covers bounded functions $f$, while Theorem~\ref{th:2.2} is about functions whose growth is compared with the growth of a polynomial. This leaves a gap where $f$ grows slower than a polynomial, e.g.\ at a logarithmic scale. To deal with this case, we need the notion of a measurable, \textbf{locally bounded, submultiplicative function}, i.e.\ a locally bounded measurable function $h : \mathds{R}^n\to (0, \infty)$ satisfying
\begin{gather*}
    h(x+y) \leq c h(x)h(y) \quad\text{for some $c\geq 1$ and all\ } x,y \in \mathds{R}^n.
\end{gather*}
Without loss of generality, we will always assume that $h\geq 1$, otherwise we would replace $h(x)$ by $h(x)+1$. Typical examples of submultiplicate functions are
$(1+|x|)^{\beta}$, $\Lambda(x)^\beta$, $e^{\alpha |x|^\beta}$ for $\beta \in [0, 1]$, and $\log^{\beta}(|x|+e)$ for $\beta\ge 0$.
Observe that every submultiplicative function is exponentially bounded. For further details see \cite[Section~25]{sato} or \cite[Section~II.\S1]{KPS82}.

Fix some locally bounded submultiplicative $h$ that satisfies, in addition,
\begin{align}\label{eq:3.1}
    \lim_{|x|\to\infty} \Lambda^{-k}(x) h(x)=0
    \quad\text{for some\ } k\in\mathds{N}.
\end{align}
The condition \eqref{eq:3.1} implies the so-called \textbf{GRS (Gelfand--Raikov--Shilov)-condition}, see \cite{BK22} and \cite{FGL06} for details.
We replace the pair $(L_\beta^1(\mathds{R}^n),L^\infty_{-\beta}(\mathds{R}^n))$ by
\begin{align*}
&   L_h^1(\mathds{R}^n)
    :=\left\{g\in L^1(\mathds{R}^n) \mid \|g\|_{L^1_{h}}:=\int_{\mathds{R}^n} |g(x)|h(x)\,dx < \infty\right\},\\
&    L^\infty_{h^{-1}}(\mathds{R}^n)
    :=\left\{f\in L^\infty_{\mathrm{loc}}(\mathds{R}^n) \mid \|f\|_{L^\infty_{h^{-1}}} := \|h^{-1}f\|_{L^\infty} = \esssup_{x\in \mathds{R}^n}h(x)^{-1}|f(x)|<\infty \right\}
\intertext{and use, instead of $A_\beta$,}
    A_h &:=\{c\delta_0+ g \mid c\in\mathds{C},g\in L^1_{\widetilde h}(\mathds{R}^n)\}.
\end{align*}
The family $A_h$ is a convolution algebra and, due to the GRS-condition, an element of $u\in A_h$ is invertible if, and only if, it is invertible in $A_1$, where $A_1$ is the algebra with $h=1$.  One can replace the Peetre inequality by $h(x-y)\leq c h(x)h(-y)$, which is a direct consequence of the submultiplicativity of $h$, and then get by iteration $h(x-y)\leq c^2 h(x)h(-(y-z))h(-z)$. This allows one to repeat the arguments in the proof of Theorem \ref{th:2.1} and arrive at a version of this theorem with $L_\beta^1 \rightsquigarrow L_{\widetilde h}^1$ and $L_{-\beta}^1 \rightsquigarrow L_{h^{-1}}^\infty$. If $\widetilde m(D)$ maps $C_c^\infty(\mathds{R}^n)$ into $L^1_h(\mathds{R}^n)$, one can apply this version of Theorem \ref{th:2.1} with
\begin{gather*}
    Y := \left\{\widetilde{\widetilde m(D) \phi}\ \big| \ \phi \in C_c^\infty(\mathds{R}^n)\right\} \subset L_{\widetilde h}^1(\mathds{R}^n) .
\end{gather*}
This results in the following analogue of Theorem~\ref{th:2.2} for slowly growing functions.
\begin{theorem}[Liouville property for slowly growing functions]\label{th:3.1}
    Let $m\in C(\mathds{R}^n)$ be such that the Fourier multiplier operator
    \begin{gather*}
        C_c^\infty(\mathds{R}^n)\ni \phi \mapsto \widetilde m(D)\phi := \Fcal^{-1}(\widetilde m\widehat\phi)
    \end{gather*}
    maps $C_c^\infty(\mathds{R}^n)$ into $L^1_h(\mathds{R}^n)$. Suppose $f \in L^\infty_{h^{-1}}(\mathds{R}^n)$ is such that $m(D)f=0$ as a distribution, i.e.\
     \begin{gather}\label{eq:3.3}
        \scalp{f}{\widetilde m(D)\phi} = 0 \quad\text{for all\ } \phi \in C_c^\infty(\mathds{R}^n).
    \end{gather}
    If $\left\{\eta \in \mathds{R}^n \mid  m(\eta) = 0\right\} \subset \{0\}$, then $f$ coincides Lebesgue a.e.\ with a polynomial $p\in L^\infty_{h^{-1}}(\mathds{R}^n)$.

    Conversely, if $f$ coincides Lebesgue a.e.\ with a polynomial for every $f \in L^\infty(\mathds{R}^n)$ satisfying \eqref{eq:3.3}, then $\left\{\eta \in \mathds{R}^n \mid  m(\eta) = 0\right\} \subset \{0\}$.
\end{theorem}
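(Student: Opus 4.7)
The plan is to imitate the proof of Theorem~\ref{th:2.2} essentially line by line, replacing the pair $(L^1_\beta(\rn), L^\infty_{-\beta}(\rn))$ by $(L^1_{\widetilde h}(\rn), L^\infty_{h^{-1}}(\rn))$ and replacing Theorem~\ref{th:2.1} by its weighted analogue sketched immediately before the statement. Accordingly, the first task is to make that weighted Wiener-type theorem completely precise. Inspection of the proof of Theorem~\ref{th:2.1} shows that it rests on exactly three ingredients: (i) the unital convolution algebra structure of $A_\beta$ together with an inversion statement for elements $u$ with $\re \widehat u$ bounded away from $0$; (ii) the associativity $f\ast(g\ast G) = (f\ast g)\ast G$, justified by Fubini--Tonelli via a Peetre-type inequality (see the footnote in the proof of Theorem~\ref{th:2.1}); (iii) the fact that $f$ is a tempered distribution, so that $\widehat f$ makes sense and the support argument applies.

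For (i) I will use that $A_h$ is a Banach algebra under convolution and that the GRS-condition, which follows from \eqref{eq:3.1}, ensures that $u \in A_h$ is invertible in $A_h$ if and only if it is invertible in $A_1$; in particular, once the classical Wiener inversion in $A_1$ produces $w \in A_1$ with $\widehat w = 1/\widehat u$, the GRS-condition automatically upgrades $w$ to an element of $A_h$, cf.\ \cite{FGL06,BK22}. For (ii) the iterated submultiplicativity $h(x-y) \leq c^2 h(x) h(-(y-z)) h(-z)$ combined with $f \in L^\infty_{h^{-1}}(\rn)$ and $g, G \in L^1_{\widetilde h}(\rn)$ makes the relevant triple integral absolutely convergent, so Fubini--Tonelli yields the associativity exactly as in the polynomial-weight case. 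For (iii), condition \eqref{eq:3.1} supplies $h(x) \leq C\Lambda(x)^k$ for some $k \in \nat$, whence $L^\infty_{h^{-1}}(\rn) \subset L^\infty_{-k}(\rn) \subset \Scal'(\rn)$ and $\widehat f$ is a well-defined tempered distribution.

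With the weighted Wiener theorem available, the rest of the argument is a mechanical transcription. Translation invariance of \eqref{eq:3.3} gives $f \ast \widetilde{\widetilde m(D)\phi} \equiv 0$ for every $\phi \in C_c^\infty(\rn)$, and the zero-set computation performed in the proof of Theorem~\ref{th:1.1} yields
\begin{gather*}
    \bigcap_{\phi \in C_c^\infty(\rn)} \left\{\eta \in \rn \mid \widehat{\widetilde{\widetilde m(D)\phi}}(\eta) = 0\right\}
    = \left\{\eta \in \rn \mid m(\eta) = 0\right\} \subseteq \{0\}.
\end{gather*}
Taking $Y := \{\widetilde{\widetilde m(D)\phi} \mid \phi \in C_c^\infty(\rn)\} \subset L^1_{\widetilde h}(\rn)$ in the weighted Wiener theorem, I conclude that $\supp \widehat f \subseteq \{0\}$, so $\widehat f = \sum_{|\alpha| \leq N} c_\alpha \partial^\alpha \delta_0$; inverting the Fourier transform identifies $f$ a.e.\ with a polynomial $p$, and $p \in L^\infty_{h^{-1}}(\rn)$ is immediate from $f \in L^\infty_{h^{-1}}(\rn)$. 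The converse is unchanged from Theorem~\ref{th:1.1}: any $\gamma \neq 0$ with $m(\gamma) = 0$ makes $f(x) := e^{i\gamma\cdot x}$ a bounded, hence $L^\infty_{h^{-1}}$, non-polynomial solution of $m(D)f = 0$.

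The only substantive departure from the earlier proofs is step (i): verifying that Wiener--L\'evy inversion carried out inside $A_h$ actually returns an element of $A_h$, rather than merely of $A_1$. This is precisely what the GRS-condition is designed to guarantee, and is the sole analytic role of the auxiliary hypothesis \eqref{eq:3.1}; everything else in the argument is bookkeeping that transfers transparently from polynomial to submultiplicative weights.
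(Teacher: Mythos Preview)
Your proposal is correct and follows essentially the same approach as the paper, which does not write out a separate proof but indicates that one repeats the arguments of Theorems~\ref{th:2.1} and~\ref{th:2.2} with $L^1_\beta \rightsquigarrow L^1_{\widetilde h}$, $L^\infty_{-\beta} \rightsquigarrow L^\infty_{h^{-1}}$, the Peetre inequality replaced by iterated submultiplicativity, and invertibility in $A_h$ handled via the GRS-condition. One minor expository slip: you say the GRS-consequence in step~(i) is ``the sole analytic role'' of \eqref{eq:3.1}, but you have already (correctly) invoked \eqref{eq:3.1} in step~(iii) to embed $L^\infty_{h^{-1}}(\rn)$ into $\Scal'(\rn)$; the paper likewise notes that without \eqref{eq:3.1} the solution $f$ need not be a tempered distribution.
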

Theorem \ref{th:3.1} can be used for functions of the form $h(x) = \Lambda^\beta(x)\log(e + |x|)^\alpha$ for $\alpha,\beta\geq 0$. If we use $h(x)=\log(e + |x|)$, we see that in the setting of Theorem \ref{th:3.1} every solution $m(D)f=0$ is a.e.\ constant.

We want to point out that without the boundedness condition \eqref{eq:3.1} the function $f$ is not necessarily a tempered distribution. For example, if we choose $h(x)=e^{a|x|^{\gamma}}$, then $f$ need not be a tempered distribution, and our method above would not work. In the next section we discuss functions, which might not define a tempered distributions, but are positive.
The condition that $\tilde m(D)$ maps $C_c^\infty(\rn)$ into $L_{h}^1(\mathds{R}^n)$ is essential for Theorem \ref{th:3.1}.
Recently we learned from M.\ Kwasnicki (private communication) and the paper by T.\ Grzywny \& M.\ Kwasnicki \cite[Theorem 1.9.(c) \& Theorem 4.1]{grz-kwa22}  that there is a multiplier given by a L\'{e}vy process, i.e.\ $m=\psi$, admitting a very slowly growing non-constant  function $u$ such that $\psi(D)u=0$. Note that this multiplier does not satisfy the mapping property required for Theorem \ref{th:3.1}.

\section{The Liouville theorem for rapidly growing functions}

We will now turn to the case where the solution of $\psi(D)f = 0$ is locally bounded and positive. It is well known that
\begin{gather*}
    f \ge 0 , \quad \Delta f = 0 \quad \implies \quad f \equiv \text{const}.
\end{gather*}
This is usually called the \textbf{strong Liouville property}. One cannot expect this property to hold for general Fourier multiplier operators discussed in Sections \ref{PolyBdd} and \ref{sec-slow} or even for higher order partial differential operators. Indeed,
\begin{gather*}
    f(x) := |x|^2 = x_1^2 + \cdots + x_n^2
\end{gather*}
is a non-constant nonnegative polynomially bounded solution of the equation $\Delta^2 f = 0$. Here, $\Delta^2 = \psi(D)$, $\psi(\xi) = |\xi|^4$, and $\left\{\xi \in \mathds{R}^n \mid \ \psi(\xi) = 0\right\} = \{0\} = \left\{\eta \in \mathds{R}^n \mid \ \psi(-i\eta) = 0\right\}$ (cf.\ Theorem \ref{th:4.3} below). So, we consider in this section Fourier multiplier operators $\psi(D)$ that generate positivity preserving operator semigroups $\left\{e^{-t \psi(D)}\right\}_{t\geq 0}$, i.e.\ such that $\psi$ are the characteristic exponents of L\'evy processes, see Section~\ref{sec-1} and Example~\ref{ex:1.5}. Even within this class of operators, the Laplacian is a special case, and the strong Liouville property does not hold for more general second order partial differential operators $\mathcal{L}$ without restrictions on the growth rate of a solution $f$ of the equation $\mathcal{L}f = 0$. Let
\begin{gather*}
    \mathcal{L} u(x) =  \frac12\, \nabla\cdot Q\nabla u(x) + b\cdot\nabla u(x) ,
\end{gather*}
where $Q \in \mathds{R}^{n\times n}$ is a positive semi-definite matrix, and $b \in \mathds{R}^n\setminus\{0\}$. Then $\mathcal{L}f = 0$ has non-constant nonnegative solutions. Indeed, if $Q \not= 0$, there exists $c_0 \in \mathds{R}^n$ such that $c_0\cdot Qc_0 > 0$. Since $b \not= 0$, there exists $c$ in a neighbourhood of $c_0$ such that $c\cdot Qc > 0$ and $b\cdot c \not= 0$. Let
\begin{gather*}
    f(x) := e^{\tau c\cdot x}, \quad\text{where}\quad \tau = - \frac{2b\cdot c}{c\cdot Qc} \not= 0 .
\end{gather*}
Then $f > 0$ is non-constant and
\begin{align*}
    \mathcal{L}f(x)
    &= \frac12\, \nabla\cdot Q\nabla e^{\tau c\cdot x} + b\cdot\nabla e^{\tau c\cdot x} = \left(\frac12\, \tau^2 c\cdot Qc + \tau b\cdot c\right) e^{\tau c\cdot x} \\
    &= \tau \left(\frac12\, \tau c\cdot Qc + b\cdot c\right) e^{\tau c\cdot x}  = 0 .
\end{align*}
If $Q = 0$, it is sufficient to take any $c \in \mathds{R}^n\setminus\{0\}$ such that $b\cdot c = 0$. Then
\begin{gather*}
    f(x) := e^{c\cdot x}
\end{gather*}
is positive, non-constant, and
\begin{gather*}
    \mathcal{L}f(x) = b\cdot\nabla e^{c\cdot x} =  (b\cdot c) e^{c\cdot x}  = 0 .
\end{gather*}

It follows from the above that one needs to put appropriate boundedness restrictions on $f$. Note also that while local boundedness of $f$ is sufficient
to ensure that $\scalp{f}{\widetilde \psi(D)\phi}$ is well-defined when $\psi(D)$ is a local operator, i.e.\ a partial differential operator, one needs boundedness restrictions on $f$ to ensure that $\psi(D)f$ has a meaning when $\psi(D)$ is non-local.

Our current proof is a refinement of our result in \cite[Theorem 17]{BS21}, and we focus here on the role of the upper bound. The key ingredient in the proof of \cite[Theorem 17]{BS21} is the equivalence of $-\psi(D)f=0$ and $e^{-t\psi(D)}f=f$, which is used to get a Choquet representation of all positive solutions $f\geq 0$. In order to define $\psi(D)f$ or $e^{-t\psi(D)}f$ as a distribution, we need a bound $f\leq g$ and an integrability condition on the measure $\nu$ appearing in the L\'evy--Khintchine representation \eqref{eq:1.2} of $\psi$, see the discussion in \cite{BS21}. Here we will concentrate on the role of the upper bound $g$ in the proof of the strong Liouville property which was glossed-over in our presentation in \cite[Theorem 17]{BS21}; this explains, in particular, in which directions $\psi$ can be extended from $\mathds{R}^n$ into $\mathds{C}^n = \mathds{R}^n + i\mathds{R}^n$.

Let $g : \mathds{R}^n\to [1, \infty)$ be a locally bounded, measurable submultiplicative function. We need to describe the directional growth behaviour of $g$. For $\omega \in \mathds{S}^{n - 1} := \left\{x \in \mathds{R}^n\mid |x| = 1\right\}$, let
\begin{gather*}
    g_\omega(r) := g(r\omega) , \quad r \ge 0 .
\end{gather*}
Set
\begin{gather*}
       \beta(\omega) := \inf_{r > 0} \frac{\ln g_\omega(r)}{r} = \lim_{r \to \infty} \frac{\ln g_\omega(r)}{r}\,.
\end{gather*}
Since $\ln g_\omega$ is subadditive, the infimum is, in fact, a limit; moreover, the function $\beta : \mathds{S}^{n - 1} \to \mathds{R}$ is continuous,
see \cite[Theorem 7.13.2]{HP57}.

Applying \cite[Chapter~II, Theorem~1.3]{KPS82} to the function $v(t) := g_\omega(\ln t)$, $t > 0$,
we conclude that
\begin{equation}\label{eq:4.1}
    g_\omega(r) \geq e^{\beta(\omega) r} \quad\text{for all\ } r > 0 ,
\end{equation}
and that for every $\epsilon > 0$, there is some $r_\epsilon>0$ such that
\begin{equation}\label{eq:4.2}
    g_\omega(r) \leq e^{(\beta(\omega) + \epsilon) r} \quad\text{for all\ } r > r_\epsilon.
\end{equation}
Let
\begin{equation}\label{Pi}
    \Pi_g := \left\{\xi \in \mathds{R}^n\mid \xi\cdot\omega \leq \beta(\omega) \quad\text{for all\ } \omega \in \mathds{S}^{n - 1}\right\} .
\end{equation}

\begin{example}\label{ex:4.2}
\begin{enumerate}
\item\label{ex:4.2-1}
    If $g(x) = (1 + |x|)^\lambda$ with $\lambda \geq 0$, or $g(x) = e^{\alpha |x|^\gamma}$ with $\alpha \geq 0$, $\gamma \in [0, 1)$, then $\beta(\omega) \equiv 0$ and $\Pi_g = \{0\}$.

\item\label{ex:4.2-2}
    If $g(x) = e^{\alpha |x|}$ with $\alpha > 0$, then $\beta(\omega) \equiv \alpha$ and
    \begin{gather*}
        \Pi_g = \left\{\eta \in \mathds{R}^n\mid |\eta| \leq \alpha\right\} .
    \end{gather*}

\item\label{ex:4.2-3}
    If $g(x) : = \max\left\{e^{x_1}, 1\right\}$, then $\beta(\omega) =  \max\left\{\omega_1, 0\right\}$, and it is easy to see that
    \begin{align*}
    \Pi_g
    &=  \left\{\xi \in \mathds{R}^n\mid \xi_1\omega_1 + \sum_{j = 2}^d \xi_j\omega_j \leq \max\left\{\omega_1, 0\right\} \quad\text{for all\ }  \omega \in \mathds{S}^{n - 1}\right\}
    \\
    &= \left\{\xi = (\xi_1, 0, \dots, 0) \in \mathds{R}^n\mid \xi_1\omega_1  \leq \max\left\{\omega_1, 0\right\} \quad\text{for all\ }  \omega_1 \in [-1, 1]\right\} \\
    &= \left\{\xi = (\xi_1, 0, \dots, 0) \in \mathds{R}^n\mid \xi_1 \in [0, 1]\right\}  ,
    \end{align*}
    i.e.\ $\Pi_g$ is the one-dimensional interval $[0,1]\times\{0\}\times\dots\times\{0\}$.
\end{enumerate}
\end{example}
We will need to extend $\psi$ into a strip in $\mathbb{C}^n$. This can be achieved by \eqref{eq:1.1} or \eqref{eq:1.2} provided that the measure $\nu$ is sufficiently well-behaved.
It follows from \eqref{eq:4.1} and \eqref{Pi} that if $\eta \in \Pi_g$, then
\[
\left|e^{i(\xi - i\eta)y}\right| = e^{\eta y} \le g(y) \quad\text{for all\ } y \in  \mathds{R}^n .
\]
So, if one assumes, as we do in the next theorem, that $\int_{|y| \geq 1} g(y) \,\nu(dy) < \infty$, then it follows from \eqref{eq:1.2}  that
$\psi(\xi -i\eta)$ is well-defined.
\begin{theorem}[strong Liouville property]\label{th:4.3}
    Let $\psi : \mathds{R}^n\to \mathds{C}$\ be the characteristic exponent of a L\'evy process with L\'evy triplet $(b,Q, \nu)$. Let $g : \mathds{R}^n\to [1, \infty)$ be a locally bounded, measurable submultiplicative function  such that $\int_{|y| \geq 1} g(y) \,\nu(dy) < \infty$. Every measurable, positive and $g$-bounded \textup{(}$0 \leq f \leq g$\textup{)} weak solution $f$ of the equation $\psi(D) f = 0$ is constant if, and only if,
    \begin{equation}\label{eq:4.4}
        \left\{\xi \in \mathds{R}^n\mid \psi(\xi) = 0\right\}
        = \{0\}
        = \left\{\eta \in \Pi_g\mid \psi(-i\eta) = 0\right\} .
    \end{equation}
\end{theorem}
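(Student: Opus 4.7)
The plan is to exploit the transition semigroup of the L\'evy process with characteristic exponent $\psi$, as in \cite[Theorem 17]{BS21}, and turn the equation $\psi(D)f=0$ into the fixed-point equation $e^{-t\psi(D)} f = f$ for every $t > 0$. A Choquet--Deny representation for positive fixed points of this convolution semigroup then expresses $f$ as an average of exponentials $e^{\eta\cdot x}$, which forces $f$ to be constant exactly when the only admissible $\eta$ is $0$. The preparatory step is to verify, using the $g$-integrability of $\nu$ at infinity, that $\psi$ extends by the L\'evy--Khintchine formula to a continuous function on $\real^n - i\Pi_g$: for $\eta \in \Pi_g$ and $y = r\omega$ with $r > 0$, the bound $\eta\cdot y = r\,\eta\cdot\omega \leq r\beta(\omega)$ combined with \eqref{eq:4.1} gives $e^{\eta\cdot y} \leq g(y)$, so $\int_{|y|\geq 1} e^{\eta\cdot y}\,\nu(dy) < \infty$. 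In particular, $e^{\eta\cdot x}$ is $g$-bounded whenever $\eta \in \Pi_g$, and a weak computation shows $\psi(D) e^{\eta\cdot x} = \psi(-i\eta) e^{\eta\cdot x}$.

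The necessity direction (\emph{only if}) I would treat by contraposition. If $\psi(\xi_0) = 0$ for some $\xi_0 \neq 0$, then also $\psi(-\xi_0) = \overline{\psi(\xi_0)} = 0$, and $f(x) := \tfrac12 (1 + \cos(\xi_0\cdot x))$ is a non-constant bounded (hence $g$-bounded, since $g\geq 1$) nonnegative weak solution of $\psi(D)f=0$. If instead $\psi(-i\eta_0) = 0$ for some $\eta_0\in\Pi_g\setminus\{0\}$, then the preceding paragraph exhibits $f(x) := e^{\eta_0\cdot x}$ as a non-constant positive $g$-bounded weak solution. Either way strong Liouville fails.

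For sufficiency (\emph{if}), assume \eqref{eq:4.4} and let $f$ be a measurable weak solution with $0 \leq f \leq g$. Submultiplicativity of $g$ combined with $\int_{|y|\geq 1} g(y)\,\nu(dy) < \infty$ yields $\Ee\, g(x + X_t) \leq c\, g(x)\, \Ee\, g(X_t) < \infty$ (cf.\ \cite[Theorem~25.3]{sato}), so $\Pcal_t f(x) := \Ee f(x + X_t)$ is well-defined and finite and realizes $e^{-t\psi(D)}f$. As in \cite[Theorem 17]{BS21}, the weak equation $\psi(D)f = 0$ is equivalent to $\Pcal_t f = f$ for all $t > 0$. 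The convex cone of $g$-bounded positive fixed points of $(\Pcal_t)_{t\geq 0}$ admits a Choquet representation
\begin{gather*}
    f(x) = \int_{E_g} e^{\eta\cdot x}\, d\mu(\eta), \qquad E_g := \bigl\{\eta \in \Pi_g \;:\; \psi(-i\eta) = 0\bigr\},
\end{gather*}
for some Radon measure $\mu$ on $E_g$: the exponentials $e^{\eta\cdot x}$ with $\eta \in E_g$ are, by the preparatory step, precisely the minimal $g$-bounded positive fixed points, and Choquet--Deny (as used in \cite{BS21}) gives the integral representation of every element of the cone. Under \eqref{eq:4.4}, $E_g = \{0\}$, forcing $\mu$ to be a point mass at the origin and hence $f$ to be almost everywhere constant.

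The main obstacle is the Choquet--Deny step: one has to identify the extremal rays of the cone of $g$-bounded positive $\Pcal_t$-invariant functions with the exponentials indexed by $E_g$, and verify the hypotheses (metrizability of a suitable base of the cone, integrability against $\mu$ of the bounds, and matching of the weak and semigroup formulations off a null set) needed to deduce the representation. Once the role of $\Pi_g$ as the set of admissible imaginary shifts of $\psi$ is made explicit, the remainder of the argument is a direct refinement of \cite[Theorem 17]{BS21} with the upper bound $g$ tracked through every step.
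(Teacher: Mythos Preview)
Your necessity direction matches the paper exactly. For sufficiency you and the paper both invoke the Choquet representation from \cite[Theorem 17]{BS21}, but you diverge at the crucial point. You assert that the extremal rays of the cone of \emph{$g$-bounded} positive fixed points are precisely the exponentials $e^{\eta\cdot x}$ with $\eta\in E_g=\{\eta\in\Pi_g:\psi(-i\eta)=0\}$, and you treat this as essentially established by your preparatory step. But that step only gives one inclusion (if $\eta\in\Pi_g$ then $e^{\eta\cdot x}\leq g$); it does not show that an exponential with $\eta\notin\Pi_g$ fails to be $g$-bounded, nor that no new extremals appear when you shrink the cone. The paper avoids re-analysing the extremals of the $g$-bounded cone altogether: it takes the Choquet representation $f(x)=\int_E e^{x\cdot\xi}\,\rho(d\xi)$ over the \emph{full} zero set $E=\{\eta:\psi(-i\eta)=0\}$ straight from \cite{BS21}, and then proves a posteriori that $\supp\rho\subseteq\Pi_g$ by a direct growth estimate: if $\xi^0\in\supp\rho\setminus\Pi_g$, pick $\omega^0$ with $\xi^0\cdot\omega^0>\beta(\omega^0)$ and use \eqref{eq:4.2} to show $f(r\omega^0)/g(r\omega^0)\to\infty$, contradicting $f\leq g$.

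This growth argument via \eqref{eq:4.2} is the genuine new ingredient over \cite{BS21}, and it is exactly the piece your proposal leaves implicit inside the phrase ``precisely the minimal $g$-bounded positive fixed points''. Your route is not wrong, but to make it rigorous you would end up proving the same estimate; the paper's ordering (general Choquet first, then restrict the support) is cleaner because it reuses \cite{BS21} verbatim rather than redoing Choquet on a different cone.
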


\begin{proof}
\emph{Sufficiency of \eqref{eq:4.4}.} Assume that the first equality in \eqref{eq:4.4} holds, $0 \leq f \leq g$, and $\psi(D) f = 0$. Theorem \ref{th:4.3} is a refinement of the corresponding result \cite[Theorem 17]{BS21} that takes care of the nature of the extension of $\psi$ into the complex plane. Rather than reproducing the full proof here, we quote the structural part of the solution to $\psi(D)f=0$ from \cite[Theorem 17]{BS21}: there exists a measure $\rho$ with support in $E := \left\{\eta \in \mathds{R}^n\mid \psi(-i\eta) = 0\right\}$ such that
\begin{gather*}
    f(x) = \int_E e^{x\cdot\xi}\, \rho(d\xi) .
\end{gather*}
We will show that $\supp\rho \subset \left\{\eta \in \Pi_g\mid \psi(-i\eta) = 0\right\}$; thus, the second equality in \eqref{eq:4.4} proves that $f\equiv \rho(\{0\})$ a.s.

Suppose there exists some $\xi^0 \in \supp \rho\setminus \Pi_g$. This means that there is some $\omega^0 \in \mathds{S}^{n-1}$ such that $\xi^0\cdot\omega^0 > \beta(\omega^0)$. Take any $0 < \epsilon < \frac12(\xi^0\cdot\omega^0 - \beta(\omega^0))$ and consider the open ball $B_\epsilon := B_\epsilon(\xi^0)$ of radius $\epsilon$ centred at $\xi^0$. Let $x = r \omega^0$, $r > 0$. Then
\begin{gather*}
    x\cdot\xi
    = x\cdot\xi^0 + x\cdot(\xi - \xi^0)
    \geq (\xi^0\cdot\omega^0) r - r|\xi - \xi^0|
    > (\xi^0\cdot\omega^0 - \epsilon) r
    \quad\text{for all\ } \xi \in B_\epsilon,
\end{gather*}
and there exists some $r_0 > 0$, depending only on $g$, $\omega_0$, and $\epsilon$ (see \eqref{eq:4.2}), such that
\begin{gather*}
    \frac{e^{x\cdot\xi}}{g_{\omega^0}(r)}
    > \frac{e^{(\xi^0\cdot\omega^0 - \epsilon) r}}{g_{\omega^0}(r)}
    \geq \frac{e^{(\xi^0\cdot\omega^0 - \epsilon) r}}{e^{(\beta(\omega^0) + \epsilon) r}}
    = e^{(\xi^0\cdot\omega^0 - \beta(\omega^0) - 2\epsilon)r}
    \quad\text{for all\ } r \geq r_0 .
\end{gather*}
Since $\xi^0 \in \supp \rho$, we know that $\rho(B_\epsilon) > 0$. Thus,
\begin{align*}
    \frac{f(x)}{g(x)}
    = \frac{\int_E e^{x\cdot\xi}\, \rho(d\xi)}{g(x)}
    \geq \frac{\int_{B_\epsilon} e^{x\cdot\xi}\, \rho(d\xi)}{g_{\omega^0}(r)}
    &> \frac{e^{(\xi^0\cdot\omega^0 - \beta(\omega^0) - 2\epsilon)r}  g_{\omega^0}(r) \rho(B_\epsilon)}{g_{\omega^0}(r)}  \\
    &= e^{(\xi^0\cdot\omega^0 - \beta(\omega^0) - 2\epsilon)r}  \rho(B_\epsilon) \to \infty
    \quad\text{as\ } r \to \infty.
\end{align*}
This contradicts the bound $0 \leq f \leq g$, and we conclude that $\supp \rho\setminus \Pi_g = \emptyset$, hence $\supp\rho \subset \left\{\eta \in \Pi_g\mid \psi(-i\eta) = 0\right\}$.

\medskip\noindent
\emph{Necessity of \eqref{eq:4.4}.}
Suppose every measurable, positive and $g$-bounded  weak solution $f$ of $\psi(D) f = 0$ is constant. If the first equality in \eqref{eq:4.4} does not hold, then there exists $\xi^0 \in \left\{\xi \in \mathds{R}^d : \ \psi(\xi) = 0\right\}\setminus\{0\}$. Since $\psi(-\xi^0)=\overline{\psi(\xi^0)}=0$ and $\psi(0)=0$, one has $-\xi^0, 0 \in \left\{\xi \in \mathds{R}^d : \ \psi(\xi) = 0\right\}$. Let $e_{\pm\xi^0}(x) := e^{\pm i\xi^0\cdot x}$. Then $\psi(D)e_{\pm\xi^0} = \psi(\pm\xi^0) e_{\pm\xi^0} = 0$ and $\psi(D)1 = \psi(0) = 0$. Hence
\begin{gather*}
    f(x) := \frac12\big(1 + \cos(\xi^0\cdot x)\big)
\end{gather*}
is a non-constant measurable, positive and $g$-bounded  weak solution $f$ of $\psi(D) f = 0$. This contradiction proves the first equality in \eqref{eq:4.4}.

Suppose now the second equality in \eqref{eq:4.4} does not hold, i.e.\ there exists a nonzero $\theta \in \left\{\eta \in \Pi_g\mid \psi(-i\eta) = 0\right\}$. From the definition of $\Pi_g$ we see that, for every $x = r\omega$, $\omega \in \mathds{S}^{n-1}$ and $r \geq 0$,
\begin{gather*}
    f(x)
    := e^{\theta\cdot x}
    = e^{(\theta\cdot \omega) r}
    \leq e^{\beta(\omega) r}
    \leq g_\omega(r)
    = g(r\omega)
    = g(x),
\end{gather*}
see \eqref{eq:4.1}. So, $f$ is non-constant, positive and $g$-bounded. Since $\psi(D) f = \psi(-i\theta ) f \equiv 0$, we get again a contradiction.
\end{proof}

\section{Coupling}

In this section we want to establish a connection between our Livouville theorem (Theorem~\ref{th:1.1}), the coupling property of L\'evy processes, and the notion of space-time harmonic functions. Throughout, $(X_t)_{t\geq 0}$ is a L\'evy process starting at $0$ with characteristic exponent $\psi$, see Section~\ref{sec-1}.

A \textbf{coupling} of the L\'evy process $(X_t)_{t\geq 0}$ with values in $\real^n$ is any Markov process $((Z^x_t,Z^y_t))_{t\geq 0}$ taking values in $\real^{2n}$ such that each of the marginal processes $(Z_t^z)_{t\geq 0}$ has the same finite dimensional distributions as $(X_t^z)_{t\geq 0}$, $X_t^z:=X_t+z$, $z=x,y$. The coupling is usually realized on a new probability space $(\Omega, \mathbb{P}=\mathbb{P}^{(x,y)},\mathcal{F})$. The L\'evy process $(X_t)_{t\geq 0}$ has the \textbf{(exact) coupling property}, if for some coupling with $x\neq y$ the trajectories of the processes $(Z_t^x)_{t\geq 0}$ and $(Z_t^y)_{t\geq 0}$ meet with probability one in finite time; this is the \textbf{coupling time} $\tau=\tau^{x,y}$. Intuitively, both processes $(Z_t^x)_{t\ge 0}$ and $(Z_t^y)_{t\ge 0}$ run on the same probability space, move together and cannot (statistically) be distinguished from each other or from $(X_t+z)_{t\geq 0}$.

Coupling techniques provide powerful tools to study the regularity of the semigroup $x\mapsto \mathcal P_t f(x) = \Ee f(X_t^x)$, the existence of invariant (stationary) measures for $(X_t^x)_{t\geq 0, x\in\mathds{R}^n}$ and many further properties, see the discussion in  \cite{T00}.  It was shown in \cite[Theorem~4.1]{SW11} that a L\'evy process has the coupling property if, and only if, the transition probability $ p_t(dy)$ of the L\'evy process $(X_t)_{t\ge 0}$ has an absolutely continuous component for some $t\ge 0$.

If $(X_t^x)_{t\geq 0, x\in \mathds{R}^n}$ is a L\'evy process (or a general Markov process with generator $\mathcal{A}_x$), the space-time process $((s+t,X_t^x))_{t\geq 0, (s,x)\in [0,\infty)\times\mathds{R}^n}$ is again a L\'evy process (resp., Markov process), and its semigroup is given by $\mathcal Q_t u(s,x) = \Ee(s+t,X_t^x)$. Thus, the infinitesimal generator is of the form $\frac{d}{ds}-\psi(D_x)$ (resp.\ $\frac{d}{ds}+\mathcal{A}_x$), and we are naturally led to the notion of space-time harmonic functions and the space-time Liouville property.

\begin{definition}\label{def:5.1}
Let $(X_t^x)_{t\geq 0,x\in\mathds{R}^n}$ be a L\'evy process.
\begin{enumerate}
\item\label{def:5.1-1}
    A function $f:[0,\infty)\times \mathds{R}^n\to \mathds{R}$ is \textbf{space-time harmonic}, if one has $f(s,x)=\Ee^x f(s+t,X_t)=\Ee f(s+t,X_t+x)$ for \emph{every} $t,s\geq 0$ and \emph{every} $x\in\mathds{R}^n$.
\item\label{def:5.1-2}
    The process has the \textbf{space-time Liouville property}, if every measurable and bounded space-time harmonic function is constant.
\end{enumerate}
\end{definition}

\allowdisplaybreaks

\begin{remark}\label{re:5.2}
\begin{enumerate}
\item\label{re:5.2-1}
    The space-time Liouville property is known to be equivalent to the exact coupling property for Markov processes, see
    \cite[Theorem 4.1, p.~205]{T00}.
\item\label{re:5.2-2}
    We call a function $f:\mathds{R}^{n+1}\to \mathds{R}$ satisfying $f(s,x)=\Ee^x f(s+t,X_t)$ for every $t\geq 0$ and $(s,x)\in \mathds{R}^{n+1}$ also space-time harmonic.
\item\label{re:5.2-3}
    Notice that the notions of \enquote{space-time harmonicity} and \enquote{exact coupling} are pointwise defined notions, which do not allow for exceptional sets. This is the main difficulty when we want to compare our notion of the Liouville property and the space-time Liouville property.
\end{enumerate}
\end{remark}

For a L\'evy triplet $(b,Q,\nu)$ with L\'evy process $X=(X_t)_{t\ge 0}$ in $\mathds{R}^n$ and a vector $\eta\in \mathds{R}^n$ we introduce the notation
\begin{gather*}
    b^\eta := \eta\cdot b+\int \eta\cdot y\left(\I_{(0,1)}(|\eta\cdot y|)-\I_{(0,1)}(|y|)\right)\nu(dy).
\end{gather*}
This is the vector in the L\'evy triplet of $\eta\cdot X_t$, see \cite[Proposition 11.10]{sato}.
We need a few auxiliary lemmas preparing the proof of the main result of this section, Theorem~\ref{th:5.7}.
\begin{lemma}\label{le:5.3_0}
    Let $\psi:\mathds{R}^{d}\to\mathds{C}$  be the characteristic exponent of a L\'evy process with L\'evy triplet $(b,Q,\nu)$. If $\xi \in \mathds{R}^n$ is such that $\psi(\xi) \in i\mathds{R}$, then $\psi(\xi) = -i b^\xi$.
\end{lemma}
\begin{proof}
Let
\begin{align*}
    &\mathds{E}_\xi := \left\{y \in \mathds{R}^n \mid \ 1-e^{iy\cdot\xi} = 0\right\} = \left\{y \in \mathds{R}^n \mid \ y\cdot\xi \in 2\pi\mathds{Z}\right\}, \\
    &\mathds{E}_\xi^0 := \left\{y \in \mathds{R}^n \mid \ y\cdot\xi = 0\right\} = \{\xi\}^\perp.
\end{align*}
Since $\re\left(1-e^{iy\cdot\xi}\right) \ge 0$, and $\re\left(1-e^{iy\cdot\xi}\right) = 0$ if, and only if, $1-e^{iy\cdot\xi} = 0$, it follows from $\psi(\xi) \in i\mathds{R}$ and \eqref{eq:1.2} that $\nu(\mathds{R}^n\setminus \mathds{E}_\xi) = 0$ and $Q\xi=0$.

If $y \in \mathds{E}_\xi \setminus \mathds{E}_\xi^0$, then $|y\cdot\xi| \ge 2\pi > 1$. So, $\I_{(0,1)}(|\xi\cdot y|) = 0$ if $y \in \mathds{E}_\xi$.
Hence
\begin{align*}
    \psi(\xi)
    &= -ib\cdot\xi  + \int\limits_{\mathds{R}^n} \left[1-e^{iy\cdot\xi} + iy\cdot\xi\I_{(0,1)}(|y|)\right]\, \nu(dy)   \\
    &= -ib\cdot\xi  + \int\limits_{\mathds{E}_\xi} \left[1-e^{iy\cdot\xi} + iy\cdot\xi\I_{(0,1)}(|y|)\right]\, \nu(dy) \\
    &= -ib\cdot\xi  + \int\limits_{\mathds{E}_\xi} \left[1-e^{iy\cdot\xi} + iy\cdot\xi\left(\I_{(0,1)}(|y|) - \I_{(0,1)}(|\xi\cdot y|)\right) \right]\nu(dy) \\
    &= -ib\cdot\xi  + i \int\limits_{\mathds{E}_\xi} y\cdot\xi\left[\I_{(0,1)}(|y|) - \I_{(0,1)}(|\xi\cdot y|)\right] \nu(dy) \\
    &= -ib\cdot\xi  + i \int\limits_{\mathds{R}^n} y\cdot\xi\left[\I_{(0,1)}(|y|) - \I_{(0,1)}(|\xi\cdot y|)\right] \nu(dy) = -i b^\xi .
\qedhere
\end{align*}
\end{proof}

\begin{lemma}\label{le:5.3}
    Let $n,d\in\mathds{N}$,\, $\psi_1:\mathds{R}^{d}\to\mathds{C}$ and $\psi_2:\mathds{R}^n\to\mathds{C}$ be characteristic exponents of two L\'evy processes with L\'evy triplets $(b_1,Q_1,\nu_1)$ and $(b_2,Q_2,\nu_2)$. Then
\begin{gather*}
    \left\{(\eta,\xi)\in \mathds{R}^{d}\times\mathds{R}^n \mid \psi_1(\eta)+\psi_2(\xi)=0\right\}
    = \left\{\eta\in \psi_{1}^{-1}(i\mathds{R}),\; \xi\in\psi_2^{-1}(i\mathds{R}) \;\middle|\; b_{1}^\eta+b_{2}^\xi=0\right\}.
\end{gather*}
\end{lemma}

\begin{proof}
Since $\re \psi_i\geq 0$ for $i=1,2$ (see \eqref{eq:1.2}), a necessary condition for $\psi_1(\eta)+\psi_2(\xi)=0$ is $\re \psi_1(\eta)=0$ and $\re\psi_2(\xi)=0$. In this case, $\psi_1(\eta)=-i b_1^\xi $ and $\psi_2(\xi)=-i b_2^\xi$ (see Lemma \ref{le:5.3_0}), and the equality $\psi_1(\eta)+\psi_2(\xi)=0$ is equivalent to $b_{1}^\eta+b_{2}^\xi=0$.
\end{proof}

\begin{remark}\label{re:5.4}
  Assume that $d = 1$ and $\psi_1(\eta)=-ib \eta$ for some $b\in \mathds{R}\setminus\{0\}$. We see easily that
  the set $\{(\eta,\xi)\in \mathds{R}\times\mathds{R}^n \mid \psi_1(\eta)+\psi_2(\xi)=0\}$ is equal to $\{(0,0)\}$ if, and only if, $\psi_2^{-1}(i\mathds{R})=\{0\}$.
\end{remark}

\begin{lemma}\label{le:5.5}
    Let $f:[0,\infty)\times \mathds{R}^n\to\mathds{R}$ be a space-time harmonic function for the L\'evy process $(X_t)_{t\geq 0}$. Then there exists a unique extension $\tilde f:(-\infty,\infty)\times\mathds{R}^n\to\mathds{R}$, which is still space-time harmonic.
\end{lemma}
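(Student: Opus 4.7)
The plan is to construct $\tilde f$ explicitly on $(-\infty,0)\times\rn$ by \emph{running the process forward until the time argument is nonnegative} and then using $f$. The natural candidate for $s < 0$ is
\begin{gather*}
    \tilde f(s,x) := \Ee^x f(s+t,X_t), \quad\text{for any } t \geq -s,
\end{gather*}
with $\tilde f(s,x) := f(s,x)$ for $s\geq 0$. The entire content of the lemma is then contained in three verifications: (i) well-definedness of $\tilde f(s,x)$, i.e.\ independence of the choice of $t$; (ii) the extended function is space-time harmonic on all of $\real\times\rn$; (iii) uniqueness.

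For (i), fix $s<0$ and take $-s \leq t < t'$. Decomposing $X_{t'} = X_t + (X_{t'}-X_t)$ and using the independent and stationary increments property (so, conditionally on $X_t^x = y$, the random variable $X_{t'}^x$ has the distribution of $y + X_{t'-t}$ with an independent copy of the process starting from $0$), the Markov property yields
\begin{gather*}
    \Ee^x f(s+t',X_{t'}) = \Ee^x\left[\Ee^{X_t^x} f\bigl(s+t+(t'-t), X_{t'-t}\bigr)\right].
\end{gather*}
Now $s+t\geq 0$, so the space-time harmonicity of $f$ at the point $(s+t, X_t^x)$, applied with increment $t'-t\geq 0$, says that the inner expectation equals $f(s+t,X_t^x)$. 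Hence $\Ee^x f(s+t',X_{t'}) = \Ee^x f(s+t,X_t)$, which proves (i). This is the main technical step; everything else is a repetition of the same calculation.

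For (ii), let $s\in\real$ and $u\geq 0$; we need $\tilde f(s,x) = \Ee^x \tilde f(s+u,X_u)$. If $s,\,s+u\geq 0$ this is just the hypothesis on $f$. Otherwise, pick $t$ large enough so that $s+u+t\geq 0$ (hence also $s+(u+t)\geq 0$). By the definition of $\tilde f$ and the Markov property,
\begin{gather*}
    \Ee^x \tilde f(s+u,X_u)
    = \Ee^x\left[\Ee^{X_u^x} f(s+u+t, X_t)\right]
    = \Ee^x f\bigl(s+(u+t), X_{u+t}\bigr)
    = \tilde f(s,x),
\end{gather*}
where the last equality uses the choice $u+t\geq -s$ in the definition of $\tilde f(s,x)$. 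For (iii), any space-time harmonic extension $\tilde f'$ agreeing with $f$ for $s\geq 0$ must satisfy, for $s<0$ and any $t\geq -s$,
\begin{gather*}
    \tilde f'(s,x) = \Ee^x \tilde f'(s+t,X_t) = \Ee^x f(s+t,X_t) = \tilde f(s,x),
\end{gather*}
so the extension is unique.
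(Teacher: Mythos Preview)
Your proof is correct and follows essentially the same approach as the paper: define the extension by $\tilde f(s,x)=\Ee^x f(s+t,X_t)$ for some $t$ with $s+t\geq 0$, then check space-time harmonicity via the Markov property and stationary increments. The paper fixes the specific choice $t=-s$ (so it bypasses your well-definedness step (i)) and does not spell out the uniqueness argument (iii), but otherwise the computations are the same.
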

\begin{proof}
Assume that $f:[0,\infty)\times \mathds{R}^n\to \mathds{R}$ is such that
\begin{align*}
    f(s,x)
    = \Ee f(s+t,x+X_t)\quad\textrm{for all }s,t\geq 0,\; x\in\mathds{R}^n.
\end{align*}
We define for $s>0$ and $x\in \mathds{R}^n$
\begin{align*}
    f(-s,x) := \Ee f(0,x+X_{s}).
\end{align*}
Let $s > t>0$. By the Markov property and the stationary increments property of $X_t$ we obtain that
\begin{align*}
    \Ee f(-s+t,x+X_t)
    = \int \Ee\left(f(0,(x+ X_t)+ y)\right)\big|_{y=X_{s-t}} \,d\Pp
    &= \Ee f(0,(x+X_t)+(X_s-X_t))\\
    &= \Ee f(0,x+X_s) =: f(-s,x).
\end{align*}
Now let $t>s>0$. Again by the Markov property and the stationarity of the increments we see that
\begin{gather*}
    \Ee f(-s+t,x+X_t)
    = \Ee f(t-s,x+ (X_t-X_s)+X_s)
     = \Ee f(0,x+X_s)
    =: f(-s,x).
\qedhere
\end{gather*}
\end{proof}

Recall that a L\'evy process has the strong Feller property -- i.e.\ $x\mapsto \Ee u(x+X_t)$ is a continuous function for every bounded measurable $u$ -- if, and only if, the transition probability $\Pp(X_t\in dy)$ is absolutely continuous w.r.t.\ Lebesgue measure, cf.\ Jacob \cite[Lemma~4.8.20]{jac-1}.
\begin{lemma}\label{le:5.6}
    Let $(X_t)_{t\geq 0}$ be a L\'evy process with characteristic exponent $\psi_X $ and let $(S_t)_{t\geq 0}$ be an independent subordinator\footnote{A \textbf{subordinator} is a one-dimensional L\'evy process with increasing sample paths.}%
    with Laplace transform $\Ee e^{-xS_t}=e^{-tf_S(x)}$, $x\geq 0$. If both $X$ and $S$ are strong Feller processes, then the subordinated space-time process $(S_t,X_{S_t})$ is a strong Feller L\'{e}vy process; its characteristic exponent is given by
    \begin{align}\label{eq:5.1}
        f_S(-i\tau + \psi_X(\xi)),\quad (\tau,\xi)\in\mathds{R}\times\mathds{R}^n.
    \end{align}
\end{lemma}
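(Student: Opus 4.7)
The plan is to recognise $(S_t, X_{S_t})$ as the Bochner--Phillips subordination of the auxiliary space-time Lévy process $Z_t := (t, X_t)$ on $\real\times\rn$ by the independent subordinator $(S_t)$. A direct computation
\[
\Ee e^{i\tau t + i\xi\cdot X_t} = e^{i\tau t} e^{-t\psi_X(\xi)} = e^{-t(-i\tau + \psi_X(\xi))}
\]
shows that $(Z_t)$ is itself a Lévy process on $\real\times\rn$ with characteristic exponent $\psi_Z(\tau,\xi) = -i\tau + \psi_X(\xi)$, whose real part equals $\re \psi_X(\xi) \geq 0$. The evaluation $Z_{S_t} = (S_t, X_{S_t})$ is therefore precisely the subordinate of $Z$ by $S$.

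First I would verify the Lévy process property of $(Z_{S_t})_{t\geq 0}$: since $X$ and $S$ are independent, both have independent and stationary increments, and $(S_t)$ is non-decreasing, the standard Bochner subordination argument (see, e.g.\ Sato~\cite{sato}, Chapter~6) yields that $(S_{t+h} - S_t,\, X_{S_{t+h}} - X_{S_t})$ is independent of the past and equal in law to $(S_h, X_{S_h})$; right-continuity of the paths is inherited from those of $X$ and $S$. Next I would identify the characteristic exponent by conditioning on $\sigma(S_t)$ and using independence of $X$ from $S$:
\[
\Ee e^{i\tau S_t + i\xi \cdot X_{S_t}} = \Ee\bigl[e^{i\tau S_t}\, \Ee[e^{i\xi\cdot X_s}]\big|_{s = S_t}\bigr] = \Ee\bigl[e^{-S_t(-i\tau + \psi_X(\xi))}\bigr].
\]
Since $-i\tau + \psi_X(\xi)$ lies in the closed right half-plane, one can extend $f_S$ analytically from $[0,\infty)$ to $\{z\in\comp\mid \re z > 0\}$ and continuously to the imaginary boundary, and the Laplace identity $\Ee e^{-zS_t} = e^{-t f_S(z)}$ propagates to this half-plane, giving $\Ee e^{i\tau S_t + i\xi\cdot X_{S_t}} = e^{-t f_S(-i\tau + \psi_X(\xi))}$, i.e.\ the claimed characteristic exponent.

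For the strong Feller property I would invoke the criterion that a Lévy process is strong Feller if, and only if, its one-dimensional marginals are absolutely continuous with respect to Lebesgue measure (Jacob~\cite[Lemma~4.8.20]{jac-1}). By hypothesis $S_t$ has a density $q_t$ supported on $(0,\infty)$, and, for every $s > 0$, $X_s$ has a density $\rho_s$ on $\rn$; conditioning on $S_t$ and using independence yields the factorised joint law
\[
\Pp\bigl((S_t, X_{S_t}) \in du\,dy\bigr) = q_t(u)\rho_u(y)\,\I_{(0,\infty)}(u)\,du\,dy,
\]
which is absolutely continuous on $\real\times\rn$, so the subordinated Lévy process is strong Feller. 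The main obstacle I anticipate is the analytic-continuation step from the purely real Laplace identity $\Ee e^{-xS_t} = e^{-tf_S(x)}$, $x \geq 0$, to complex arguments with non-negative real part: it requires a dominated-convergence argument to pass to the boundary together with the uniqueness of analytic continuation, since the raw definition of $f_S$ in the statement is only for $x\geq 0$. Everything else -- the Lévy property and the absolute continuity of the marginals -- reduces to routine measure-theoretic bookkeeping once this identification is in place.
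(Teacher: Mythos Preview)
Your proposal is correct and follows essentially the same route as the paper: view $(S_t,X_{S_t})$ as the Bochner subordination of the space-time process $(t,X_t)$, compute the characteristic exponent by conditioning on $S_t$, and obtain the strong Feller property from the factorised density $p_{S_t}(s)\,p_{X_s}(x)\,\I_{(0,\infty)}(s)$. If anything, you are more careful than the paper about the extension of the Laplace identity $\Ee e^{-xS_t}=e^{-tf_S(x)}$ from $x\geq 0$ to the closed right half-plane, which the paper uses without comment.
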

\begin{proof}
    The process $Y=(Y_t)_{t\ge0}:=((S_t,X_{S_t}))_{t\geq 0}$ can be seen as subordination of the space-time L\'{e}vy process $((t,X_t))_{t\geq 0}$, from which we conclude that $Y$ is indeed a L\'evy process. Furthermore, $Y_t$ has for every $t>0$ a transition density $p_{Y_t}(s,x)$, which is given by
    \begin{align*}
        p_{Y_t}(s,x) = p_{X_s}(x)p_{S_t}(s)\I_{[0,\infty)}(s).
\end{align*}
    This implies that $Y$ has the strong Feller property. That the symbol of $Y$ is given by \eqref{eq:5.1} is a direct consequence of the subordination of the process $((t,X_t))_{t\geq 0}$:
    \begin{gather*}
        \Ee \left[e^{i\tau S_t + i\xi\cdot X_{S_t}}\right]
        = \Ee \left[\left.\left(\Ee e^{i\tau r + i\xi X_r}\right)\right|_{r=S_t}\right]
        = \Ee \left[e^{-(-i\tau+\psi_X(\xi)) S_t}\right]
        = e^{-t f_S(-i\tau+\psi_X(\xi))}.
    \qedhere
    \end{gather*}
\end{proof}
\begin{theorem}\label{th:5.7}
    Let $(X_t)_{t\geq 0}$ be a strong Feller L\'evy process with characteristic exponent $\psi_X$. Then the following assertions are equivalent:
    \begin{enumerate}
    \item\label{th:5.7-1} $(X_t)_{t\geq 0}$ has the \textup{(}exact\textup{)} coupling property,
    \item\label{th:5.7-2} $(X_t)_{t\geq 0}$ has the space-time Liouville property,
    \item\label{th:5.7-3} $(t,X_t)_{t\geq 0}$ has the Liouville property \textup{(}as in Theorem~\ref{th:1.1}\textup{)},
    \item\label{th:5.7-4} $\{\xi\in\mathds{R}^n \mid \psi_X(\xi)\in i\mathds{R}\}=\{0\}$.
\end{enumerate}
\end{theorem}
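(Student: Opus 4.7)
The plan is to close the cycle $(1)\Leftrightarrow(2)\Rightarrow(4)\Leftrightarrow(3)\Rightarrow(2)$. The equivalence $(1)\Leftrightarrow(2)$ is Thorisson's equivalence cited in Remark~\ref{re:5.2}\ref{re:5.2-1}, so the content lies in the remaining four arrows.

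From $\Ee e^{i(\tau t + \xi\cdot X_t)} = e^{it\tau - t\psi_X(\xi)}$ one reads off that the characteristic exponent of the space-time L\'evy process $(t,X_t)_{t\ge 0}$ in $\real\times\rn$ is $\psi_{st}(\tau,\xi) = -i\tau + \psi_X(\xi)$. The equation $\psi_{st}(\tau,\xi)=0$ forces $\psi_X(\xi)\in i\real$, after which $\tau$ is determined; hence Theorem~\ref{th:1.1} applied to $\psi_{st}$ gives $(3)\Leftrightarrow\{\psi_{st}=0\}=\{(0,0)\}\Leftrightarrow(4)$. For $(2)\Rightarrow(4)$ I argue by contraposition: if some $\xi_0\ne 0$ satisfies $\psi_X(\xi_0)=i\tau_0$ with $\tau_0\in\real$, then $f(s,x):=\cos(\tau_0 s+\xi_0\cdot x)$ is bounded, measurable and non-constant, and using $\Ee e^{i\xi_0\cdot X_t}=e^{-it\tau_0}$ one checks directly that $\Ee f(s+t,x+X_t)=f(s,x)$, contradicting $(2)$.

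The substance is in $(3)\Rightarrow(2)$. Let $f\in L^\infty([0,\infty)\times\rn)$ be space-time harmonic. Lemma~\ref{le:5.5} extends $f$ to $\tilde f\in L^\infty(\real\times\rn)$ with $\tilde f=\mathcal Q_t\tilde f$ pointwise for every $t\ge 0$. Testing against $\phi\in C_c^\infty(\real\times\rn)$, the identity $\langle\tilde f,\phi\rangle=\langle\tilde f,\mathcal Q_t^*\phi\rangle$, divided by $t$ and sent to $t\to 0^+$, yields $\langle\tilde f,\widetilde{\psi_{st}}(D)\phi\rangle=0$, so $\psi_{st}(D)\tilde f=0$ as a distribution. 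Since $\psi_{st}$ is itself a L\'evy characteristic exponent (Example~\ref{ex:1.5}\ref{ex:1.5-1}), Theorem~\ref{th:1.1} and $(3)$ force $\tilde f\equiv c$ Lebesgue almost everywhere. To upgrade to everywhere I invoke the strong Feller assumption: each $X_t$ with $t>0$ has a density $p_t$, and Fubini supplies, for any given $(s,x)$, some $t>0$ with $\tilde f(s+t,\cdot)=c$ a.e.\ on $\rn$; then
\begin{gather*}
    \tilde f(s,x) = \Ee\tilde f(s+t,x+X_t) = \int_\rn \tilde f(s+t,y)\,p_t(y-x)\,dy = c,
\end{gather*}
establishing $(2)$.

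The main obstacle is justifying the limit $(\mathcal Q_t^*\phi-\phi)/t \to -\widetilde{\psi_{st}}(D)\phi$ that underpins the distributional identity in $(3)\Rightarrow(2)$. Since $\tilde f$ is only bounded, convergence in $L^1(\real\times\rn)$ is required, and this follows from the fact that $\widetilde{\psi_{st}}(D)$ maps $C_c^\infty$ into $L^1$ (Example~\ref{ex:1.5}\ref{ex:1.5-1}) combined with dominated convergence applied to the space-time L\'evy semigroup. The remaining steps, including the pointwise upgrade via strong Feller, are essentially routine once this technical point is settled.
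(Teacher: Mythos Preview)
Your proof is correct, but the route differs from the paper's in two notable ways.

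First, the cycle: the paper proves $(2)\Rightarrow(3)$ and $(3)\Rightarrow(2)$ directly, whereas you replace $(2)\Rightarrow(3)$ by the contrapositive $(2)\Rightarrow(4)$ via the explicit counterexample $f(s,x)=\cos(\tau_0 s+\xi_0\cdot x)$. This is shorter and entirely elementary.

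Second, and more substantially, the paper's proofs of both $(2)\Rightarrow(3)$ and $(3)\Rightarrow(2)$ hinge on \emph{subordination}: one introduces an independent $1/2$-stable subordinator $(S_t)_{t\ge 0}$ and passes to the process $(S_t,X_{S_t})$, which by Lemma~\ref{le:5.6} is strong Feller and has characteristic exponent $\sqrt{-i\tau+\psi_X(\xi)}$ with the same zero set as $\psi_{st}$. The point is that the raw space-time process $(t,X_t)$ is \emph{not} strong Feller (the time component is deterministic), so one cannot directly obtain a continuous representative of $\tilde f$; subordination smears the time variable and restores strong Feller, after which continuity and the pointwise upgrade come for free. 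You sidestep this entirely: for $(3)\Rightarrow(2)$ you derive $\psi_{st}(D)\tilde f=0$ from the semigroup invariance via the $L^1$-generator, invoke $(3)$ for a.e.\ constancy, and then exploit strong Feller of $X_t$ \emph{alone}, using a Fubini slice to find a good time $t$ and integrate out the spatial variable against the density $p_t$.

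Your approach is more elementary and avoids the auxiliary subordinator and Lemma~\ref{le:5.6} altogether; the price is the small Fubini-slicing argument, which is routine. The paper's subordination device is slicker once Lemma~\ref{le:5.6} is in hand, and yields continuity of $\tilde f$ as a byproduct rather than merely pointwise constancy. The $L^1$-convergence you flag as the main obstacle is indeed standard: $(\mathcal Q_t^*)_{t\ge 0}$ is a L\'evy semigroup on $L^1(\real\times\rn)$, and $C_c^\infty$ lies in the domain of its generator $-\widetilde{\psi_{st}}(D)$ by Example~\ref{ex:1.5}\ref{ex:1.5-1}, so $\|(\mathcal Q_t^*\phi-\phi)/t+\widetilde{\psi_{st}}(D)\phi\|_{L^1}\to 0$ follows from standard $C_0$-semigroup theory.
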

\begin{proof}
\ref{th:5.7-1}$\Leftrightarrow$\ref{th:5.7-2} is due to Thorisson~\cite[Theorem~4.5, p.~205]{T00}.

\ref{th:5.7-3}$\Leftrightarrow$\ref{th:5.7-4} is due to Theorem~\ref{th:1.1}, (the proof of) Lemma~\ref{le:5.6} for $f_s(\lambda)=\lambda$ and Remark~\ref{re:5.4}.

\ref{th:5.7-2}$\Rightarrow$\ref{th:5.7-3}: let $u$ be a bounded measurable function such that $\left(\frac{d}{ds}-\psi_X(D_x)\right)u=0$ in the sense of distributions. If $\mathcal{P}_t$ is the semigroup generated by $\frac{d}{ds}-\psi(D_x)$, we know from the relation between semigroup and generator that
\begin{gather*}
    \mathcal{P}_t u(s,x) = u(s,x) + \int_0^t \mathcal{P}_r\left(\frac{d}{ds}-\psi_X(D_x)\right)u(s,x)\,dr = u(s,x)
\end{gather*}
$t>0$ and all $(s,x)\in\mathds{R}\times\mathds{R}^n$ in the sense of distributions.
Since $u(s,x) = \Ee u(s+t,x+X_t)$ does not depend on $t>0$, we have
\begin{align*}
    u(s,x)
    = \int_{0}^\infty \underbrace{\Ee u(s+r,x+X_\tau)}_{=u(s,x)}\,\Pp(S_t\in dr)
    = \Ee u(s+S_{t},x+X_{S_t}),
\end{align*}
where $S=(S_t)_{t\geq 0}$ is a $1/2$-stable subordinator.\footnote{A $1/2$-stable subordinator is a subordinator with Laplace exponent $f_S(x)=\sqrt{x}$. The transition probability of the random variable $S_t$, $t>0$, is given by the L\'evy distribution $r\mapsto t(2\pi )^{-1/2} r^{-3/2} e^{-t^2/2r}$, $r>0$, see \cite[Example 40.14]{sato}.} By Lemma \ref{le:5.6}, $(S_t,X_{S_t})$ has again the strong Feller property and hence, we choose $u$ to be continuous, and the equality  $\mathcal{P}_tu(s,x)=u(s,x)$ holds pointwise for every $(s,x)\in\mathds{R}\times\mathds{R}^n$; in particular, $u$ is a bounded and continuous function. Setting $f(t,x):= u(t,x)$ it is clear that $f$ is space-time harmonic, hence constant.

\ref{th:5.7-3}$\Rightarrow$\ref{th:5.7-2}:
We have to show that any space-time harmonic function $f:[0,\infty)\times \mathds{R}^n\to\mathds{R}$ is constant. Fix such an $f$ and construct, as in Lemma~\ref{le:5.5}, its unique extension to the negative real line; this extension is still space-time harmonic.
By a similar argument as before we see that $f(s,x)=\Ee f(s+S_{t},x+X_{S_t})$ pointwise for every $(s,x)\in \mathds{R}\times\mathds{R}^n$, where $S=(S_t)_{t\ge 0}$ is again a $1/2$-stable subordinator. We conclude that $f$ is continuous. The characteristic exponent of the process $(S_{t},X_{S_t})$ is given by $\sqrt{-i\tau + \psi_X(\xi)}$ by Lemma \ref{le:5.6}. As $(t,X_t)_{t\ge 0}$ has the Liouville property, we know that $i\tau - \psi_X(\xi)=0$ if, and only if, $(\tau,\xi)= (0,0)$, from which we conclude that $\sqrt{-i\tau + \psi_X(\xi)}=0$ if, and only if, $(\tau,\xi)= (0,0)$. In view of Theorem~\ref{th:1.1}, $(S_t,X_{S_t})_{t\geq 0}$ has the Liouville property; therefore, $f$ is a.e.\ constant, and, as $f$ is continuous, $f$ is constant.
\end{proof}

\end{document}